\documentclass[review]{amsart}
\usepackage[pagewise]{lineno}
\usepackage{url}
\usepackage{color}
\usepackage{algorithm}
\usepackage[margin=1.5in]{geometry}
\newtheorem{theorem}{Theorem}[section]
\newtheorem{lemma}[theorem]{Lemma}

\theoremstyle{definition}

\theoremstyle{assumptions}
\newtheorem{assumptions}{Assumption}
\theoremstyle{remark}
\newtheorem{remark}[theorem]{Remark}

\usepackage{float}
\usepackage{subfig}
\usepackage{tikz}
\usetikzlibrary{spy}
\usetikzlibrary{shapes.geometric}

\numberwithin{equation}{section}

\newcommand{\N}{\mathbb{N}}
\newcommand{\R}{\mathbb{R}}

\newcommand{\xk}{x_k}

\newcommand{\prox}{\mathrm{prox}}
\newcommand{\proj}{\mathrm{proj}}
\def\argmin{\operatornamewithlimits{argmin}\limits}

\begin{document}

\title[Plug-and-Play blind super-resolution of real MRI images]{Plug-and-Play blind super-resolution of real MRI images for  improved multiple sclerosis diagnosis}


\author[M. Cannas]{Matteo Cannas$^1$}\thanks{$^1$Dipartimento di Neuroscienze, Psicologia, Area del Farmaco e Salute del Bambino, Università di Firenze, Viale Pieraccini, 6, 50139 Firenze, Italy. \texttt{matteo.cannas@unifi.it}, \texttt{alice.mariottini@unifi.it}, \texttt{luca.massacesi@unifi.it}}
\author[A. Mariottini]{Alice Mariottini$^1$}
\author[L. Massacesi]{Luca Massacesi$^1$}
\author[F. Porta]{Federica Porta$^{2,*}$}\thanks{$^1$Dipartimento di Scienze Fisiche, Informatiche e Matematiche, 
Università di Modena e Reggio Emilia, Via Campi 213/b, 41125 Modena, Italy. \texttt{federica.porta@unimore.it}, \texttt{simone.rebegoldi@unimore.it}, \texttt{andrea.sebastiani@unimore.it}.}
\author[S. Rebegoldi]{Simone Rebegoldi$^2$}
\author[A. Sebastiani]{Andrea Sebastiani$^2$}

\thanks{$^*$Corresponding author}

\subjclass[2020]{ 65K10, 65F22, 92C55,90C26}

\keywords{Super-resolution, Plug-and-Play, Blind image restoration, Block-coordinate}

\date{28/02/2026}

\dedicatory{}

\begin{abstract}
Magnetic resonance imaging (MRI) is central to the diagnosis of multiple sclerosis, where the identification of biomarkers such as the central vein sign benefits from high-resolution images. However, most clinical brain MRI scans are performed using 1.5 T scanners, which provide lower sensitivity compared to higher-field systems. We propose a blind super-resolution framework to enhance real 1.5 T MRI images acquired in clinical settings, where only post-processed data are available and the degradation model is not fully known. The problem is formulated as a non-convex blind inverse problem involving the joint estimation of the high-resolution image and the blur kernel. Image regularization is handled through a Plug-and-Play strategy based on a pretrained denoiser, while suitable constraints are imposed on the blur kernel. To solve the resulting model, we design a heterogeneous alternating block-coordinate method in which the two variables are updated using different types of algorithms. Convergence properties are rigorously established. Experiments on FLAIR and SWI sequences acquired at 1.5 T show improved structural definition and enhanced visibility of clinically relevant features, with visual comparison against 3 T images.
\end{abstract}
\maketitle

\section{Introduction}
Multiple sclerosis (MS) is the most frequent inflammatory demyelinating and degenerative disease of the central nervous system (CNS), with estimated 2.9 million affected people worldwide \cite{atlas-ms}. MS is a chronic disease with typical onset in young adulthood showing heterogeneous clinical manifestations such as motor, cerebellar and sensory symptoms \cite{atlas-ms}. MS still represents one of the main causes of non-traumatic disability in young adults, but its course can be affected by timely treatment with disease-modifying therapies \cite{hauser2020treatment}, providing that an accurate and early diagnosis is performed. Over the past two decades, magnetic resonance imaging (MRI) has become an indispensable tool for diagnosing MS, and the optional use of an MRI marker specific to MS, defined as the central vein sign (CVS), has been implemented in the latest revision of McDonald's diagnostic criteria \cite{montalban2025diagnosis}. Although the CVS can be assessed with different field strength MRI, the sensitivity is higher for high-field 7 T compared to 3T machines \cite{okromelidze2024central}, which are in turn more sensitive compared to 1.5 T MRI \cite{castellaro2020use}. However, in clinical practice, the vast majority of people receiving a brain MRI in non-tertiary centres are scanned at magnetic field strengths of 1.5 T due to several reasons including, but not limited to, higher costs (nominal cost of 1 M dollar for Tesla of magnetic fields), increased specific absorption rate (SAR) and acoustic noise, more intense artifacts from metallic objects such as surgical clips, more common field heterogeneities and susceptibility artifacts, lower safety of some materials used in medical operations \cite{sarracanie2015low, thomas2019pros}. Improving the diagnostic usability of widely available 1.5 T MRI systems without requiring hardware upgrades would therefore have a significant impact on routine clinical practice.

To support physicians in achieving accurate diagnoses, the main aim of this work is to propose a super-resolution approach designed to enhance the overall quality of 1.5 T MRI images acquired from patients with multiple sclerosis. From a mathematical perspective, the problem can be cast as a blind super-resolution task. Indeed, since the data are acquired in a real clinical setting, only limited information about the imaging acquisition process is available; consequently, the underlying blurring degradation is not fully known.  In particular, the acquisition process can be formalized as
\begin{equation}\label{eq:model}
b = S(\theta*x) + \eta,
\end{equation}
where $b\in\mathbb{R}^m$  denotes the observed low-resolution image, $*$ represents the convolution operator between the unknown high-resolution image $x\in\mathbb{R}^n$  and the  unknown blur kernel $\theta\in\mathbb{R}^p$, $S\in\mathbb{R}^{m\times n}$ is a standard $s$-fold downsampling matrix such that $n=s^2\times m$, and $\eta\in\mathbb{R}^m$ models additive white Gaussian noise with standard deviation $\sigma\in\mathbb{R}^+$. The computation of the convolution product is performed by means of a matrix-vector multiplication $\theta*x = \Theta x = X \theta$, where $X$ and $\Theta$ are suitable structured matrices depending on the choice of the boundary conditions \cite{Hansen-etal-2006}.

The inverse problem associated with \eqref{eq:model} is severely ill-posed due to the non-uniqueness of its solution. According to the Bayesian paradigm, an approximate solution $(x,\theta)$ can be found by solving the following generally non-convex optimization problem     
\begin{equation} \label{eq:opt_probl_general}
\underset{x\in\mathbb{R}^n,\theta\in\mathbb{R}^p}{\operatorname{argmin}} \ \frac{1}{2}\|S(\theta*x) - b\|^2 +  \phi(x) + \psi(\theta),   
\end{equation}
where, $\phi:\mathbb{R}^n\rightarrow\mathbb{R}$ and $\psi:\mathbb{R}^p\rightarrow\mathbb{R}$ are proper regularization terms for $x$ and $\theta$, respectively. 

The optimization problem in \eqref{eq:opt_probl_general} is a block-variable composite problem with separable, possibly non smooth, regularization terms acting on each variable. From a numerical point of view, problems of this kind are typically addressed using alternating optimization schemes, in which the objective function is minimized sequentially with respect to one variable at a time in a Gauss-Seidel fashion \cite{Grippo-etal-1999,Grippo-etal-2000}. Among these approaches, the class of alternating forward-backward (FB) algorithms has been extensively studied in the recent literature \cite{Attouch-etal-2010,Attouch-etal-2013,Bolte-etal-2014,Frankel-etal-2015,Grippo-etal-2000,Razaviyayn-etal-2013,Tseng-etal-2009} and has also been successfully applied to imaging problems  \cite{Abboud-etal-2017,Chouzenoux-etal-2016,Cornelio-etal-2015}. Denoted by $f(x,\theta)$ the data fidelity term $\frac{1}{2}\|S(\theta*x)-b\|^2$, classical alternating FB methods for addressing \eqref{eq:opt_probl_general} can be written as
 \begin{equation}\label{eq:alt_FB}
     \left\{
     \begin{array}{l}
     x_{k+1} = \prox_{\alpha_x \phi}\left(x_k - \alpha_x\nabla_{x}f(x_k,\theta_k)\right)\\
     \\
     \theta_{k+1} = \prox_{\alpha_\theta \psi}\left(\theta_k-\alpha_\theta\nabla_{\theta}f(x_{k+1},\theta_k)\right)
     \end{array}
     \right.,
 \end{equation}
where $\alpha_x$ and $\alpha_{\theta}$ are two proper positive steplengths and $\prox_{\alpha_x\phi}, \prox_{\alpha_\theta\psi}$ denote the proximal operator of $\alpha_x\phi(\cdot)$ and $\alpha_\theta\psi$, respectively. In particular, given $\alpha\in\mathbb{R}^+$ and a proper, lower semicontinuous function $h:\mathbb{R}^s\rightarrow\mathbb{R}\cup\{\infty\}$, the proximal operator of $\alpha h$ is defined as
$$
\prox_{\alpha h}(\bar{z}) = \underset{z\in\mathbb{R}^s}{\operatorname{argmin}}\ \frac{1}{2}\|z-\bar{z}\|^2 + \alpha h(z), \quad \forall \bar{z}\in\mathbb{R}^s.
$$

\textit{Contributions.} To develop an effective approach for super-resolving real 1.5 T MRI images, it is necessary to both properly select the regularization terms $\phi$ and $\psi$ in \eqref{eq:opt_probl_general} and design an efficient alternating FB method to solve the resulting optimization problem. The contributions of this paper address both aspects. Specifically:

\begin{itemize}
\item[\textit{(i)}] For image regularization we adopt a Plug-and-Play (PnP) framework, leveraging a pretrained denoising neural network. In addition, physically meaningful constraints on the blur kernel are incorporated to prevent trivial and non-physical solutions.
\item[\textit{(ii)}] We propose an alternating FB algorithm tailored to the considered resulting optimization model. Compared with the scheme in \eqref{eq:alt_FB}, we adopt a more general strategy. Specifically, the two coordinate blocks are not both updated via a standard proximal gradient scheme, where the proximal step is associated with the regularization term and the gradient step with the data fidelity term. Instead, each block is updated using a method specifically designed for its corresponding regularization. The data fidelity term $f(\cdot,\cdot)$ is not used symmetrically for the computation of the gradients with respect to the two variables, as in \eqref{eq:alt_FB}; rather, it plays distinct roles in the update of each block. {Convergence results for the proposed algorithm are rigorously established.}
\end{itemize}
{Numerical experiments on the super-resolution of  images acquired using a 1.5 T MRI scanner (Siemens) at the Neuroradiology Unit of Careggi University Hospital in Florence were carried out. Two different MRI sequences were considered, namely FLAIR (Fluid-Attenuated Inversion Recovery) and SWI (Susceptibility-Weighted Imaging). FLAIR images enable visualization of white-matter lesions, whereas SWI images highlight venous structures within brain tissue. The results obtained with the proposed approach appear promising and potentially useful for improving the clinical use of 1.5 T MRI images for specific purposes, such as the identification of the central vein sign (CVS), a diagnostic imaging marker. In particular, white-matter lesions and venous structures can be identified with improved definition and sharper edges.}
As a gold standard for visual comparison, FLAIR and SWI MRI images acquired on a 3T machine (Ingenia, Philips) from the same patient at the Neuroradiology Unit of the Hospital were adopted.
 
\textit{Related works.} State-of-the-art approaches to MRI super-resolution are predominantly based on deep learning methods (see, e.g., \cite{Dong-etal-2014,Du-etal-2020,Jia-etal-2017,Jia-etal-2016,Zhang-etal-2022,Zhao-etal-2021,Zhao-etal-2019} and references therein). Despite their strong performance, deep super-resolution methods have some limitations. First, the large number of parameters results in substantial memory and computational demands. Moreover, in supervised approaches, performance is often constrained by the limited availability of high-quality annotated datasets, especially in the biomedical context. Self-supervised approaches \cite{Zhao-etal-2021,Zhao-etal-2019}, while avoiding the need for external paired training data by exploiting intrinsic structural information within each individual image, typically require training or fine-tuning the model separately for each new image. This requirement increases computational cost and processing time and may limit generalization across different acquisition settings.

\noindent In parallel, several model-based methods for MRI super-resolution have been proposed in the literature (see, e.g., \cite{Bano-etal-2020,Beirinckx-etal-2022,Lajous-etal-2020,Poot-etal-2010,VanSteenkiste-etal-2017}). However, these approaches generally rely on classical regularization techniques, such as Total Variation or Tikhonov regularization. In contrast to PnP frameworks, they do not incorporate learned denoisers as implicit priors. Furthermore, limited attention has been devoted to the design and analysis of optimization schemes specifically tailored to the resulting models.

\noindent Moreover, we remark that, in our application, only post-processed MRI images are available, whereas the raw acquisition data are not accessible. For this reason, we adopt a general blind super-resolution framework, as formulated in \eqref{eq:opt_probl_general}, which can in principle be extended to other imaging acquisition systems. Within this setting, the main goal of this work is to address real MRI image super-resolution by integrating PnP regularization within the blind  problem formulation \eqref{eq:opt_probl_general} and by designing an ad hoc optimization scheme with provable convergence guarantees. While a few recent works address block-coordinate methods for blind imaging problems with PnP regularization \cite{Gan-etal-2023,Huang-etal-2025,Sun-etal-2020}, they typically rely on the same class of numerical solvers for all subproblems. In contrast, we propose a heterogeneous optimization scheme, where the two blocks are updated through algorithms of different nature, resulting in a more flexible framework.


\textit{Contents.} The paper is organized as follows. Section \ref{sec:2} introduces the complete problem formulation considered, including the regularization terms. The proposed algorithm and its convergence analysis are presented in Section \ref{sec:3}. Numerical experiments are reported in Section \ref{sec:4}. Section \ref{sec:5} is devoted to concluding remarks and future perspectives.

\section{The proposed model}\label{sec:2}
 The appropriate choice of $\phi$ and $\psi$ in the blind super-resolution formulation \eqref{eq:opt_probl_general} is crucial to obtain accurate reconstructions.  In this paper, for the regularization of the unknown image $x$, we adopt a Plug-and-Play (PnP) based approach. The PnP framework represents a powerful and innovative paradigm for solving
inverse problems in computational imaging, enabling the incorporation of learned priors into the problem formulation via the regularization term \cite{hurault2022gradient,Kamilov-etal-2023,Venkatakrishnan-etal-2013}. Over the past decade, PnP methods have achieved state-of-the-art performance across a broad range of image restoration tasks \cite{Ahmad-etal-2020,Buzzard-etal-2018,Meinhardt-etal-2017,Yuan-etal-2020,Zhang-et-al-2021}. Motivated by these results and according to the approach proposed in \cite{hurault2022gradient}, we consider the following definition for $\phi$ in \eqref{eq:opt_probl_general}:
\begin{equation}\label{eq:phi}
    \phi(x) = \frac{\lambda}{2}\|x-N_{\sigma}(x)\|^2,
\end{equation}
where $\lambda\in\mathbb{R}^+$ is a positive regularization parameter and $N_{\sigma}:\mathbb{R}^n\rightarrow\mathbb{R}^n$ is any differentiable neural network architecture that has proven to be efficient for image denoising, given a noise level $\sigma\in\mathbb{R}^+$. Specifically, we fix the function $N_\sigma$ as the UNet model proposed in \cite{Zhang-et-al-2021} and later trained within a PnP framework as described in \cite{hurault2022gradient}. We employ the pretrained network weights released by the authors of \cite{hurault2022gradient}, which are available through the DeepInverse library \cite{tachella2025deepinverse}. Additional implementation details are provided in Section \ref{sec:4}. Notably, our approach is fully unsupervised, as it relies exclusively on a pretrained model without requiring additional task-specific training. 

On the other hand, by following standard approaches in the literature,  we impose classical physical constraints on the kernel $\theta$, namely non-negativity of the components and flux normalization. Moreover, as successfully done in \cite{Benfenati-etal-2023,Bertero-etal-2018,Prato-etal-2013},  we incorporate prior knowledge of the Strehl ratio (SR) of the optical system to further restrict the solution space of \eqref{eq:opt_probl_general}.  The SR is defined as the ratio between the peak intensity of an aberrated kernel and that of an ideal, diffraction-limited kernel. This prior information naturally induces an upper bound on the kernel values. The inclusion of this additional constraint effectively rules out trivial and non-physical solutions, such as the Dirac delta kernel, which would otherwise satisfy the remaining constraints. As a consequence, the regularization term $\psi$ in \eqref{eq:opt_probl_general} is chosen as the indicator function of the following closed and convex set
\begin{equation}\label{eq:Omega}
    \Omega = \left\{\theta\in\mathbb{R}^p \ : \ 0\leq\theta\leq M \ \mbox{and} \ \sum_{i} \theta_i = 1\right\},
\end{equation}
where $M$ is an approximation of the SR, which can be properly estimated from an ideal kernel (see for example \cite{Ashida-etal-2020,Chen-etal-2017}).

\section{The proposed approach}\label{sec:3}
In this section we detail a possible approach to address the blind super-resolution optimization problem \eqref{eq:opt_probl_general}, combined with the definition of $\phi$ and $\psi$ from Section \ref{sec:2}. For convenience, we report the resulting problem below. Specifically, we are interested in solving
\begin{equation}\label{eq:opt_prob_blind_super_res}
\underset{x\in\mathbb{R}^n,\theta\in\mathbb{R}^p}{\operatorname{argmin}} \ F(x,\theta)\equiv \frac{1}{2}\|S(\theta * x)-b\|^2 + \frac{\lambda}{2}\|x-N_{\sigma}(x)\|^2  + \iota_{\Omega}(\theta),
\end{equation}
where
\begin{equation*}
\iota_{\Omega}(\theta)=\begin{cases}
0, \quad &\text{if }\theta\in\Omega\\
+\infty, \quad &\text{otherwise}
\end{cases}
\end{equation*}
denotes the indicator function of the set $\Omega$ defined in \eqref{eq:Omega}. Hereafter, the data fidelity term will be denoted by $f(x,\theta)$. 

The method we propose to solve problem \eqref{eq:opt_prob_blind_super_res} is an alternating FB based algorithm, summarized in Algorithm \ref{alg:1}. 
\begin{algorithm}
\raggedright
\vspace{0.1cm}
Choose $x_{-1}, {x}_0\in\R^n$, $\theta_0\in\Omega$, $\alpha_x>0$, $\alpha_{\theta}>0$, $\rho\geq 0$ and $\nu,\gamma \in(0,1)$. \vspace{0.1cm}\\
{\sc For} $k=0,1,\ldots$
\begin{enumerate}
    \item[] {\hspace{-5mm}\sc Step 1.} Update of $x_{k+1}$.\vspace{0.1cm}\\
    \begin{enumerate}
    \item[] {\hspace{0.25cm}\sc Step 1.1.} Set $y_k = x_k + \rho\left(\nabla \phi(x_{k-1}) - \nabla \phi(x_k)\right)$.\vspace{0.1cm}\\
    \item[] {\hspace{0.25cm}\sc Step 1.2.} Set $x_{k+1} =  \prox_{\alpha_x f(\cdot,\theta_k)}(y_k - \alpha_x \nabla \phi(x_k))$.
    \end{enumerate}\vspace{0.2cm}
    \item[] {\hspace{-5mm}\sc Step 2.} Update of $\theta_{k+1}$.\vspace{0.1cm}\\
    \begin{enumerate}
    \item[] {\hspace{0.25cm}\sc Step 2.1.} Compute $\hat{\theta}_{k}=\proj_{\Omega}(\theta_k-\alpha_{\theta}\nabla_{\theta} f(x_{k+1},\theta_k))$. \vspace{0.1cm}\\
    \item[] {\hspace{0.25cm}\sc Step 2.2.} Set $d_k = \hat{\theta}_{k} - \theta_k$ and $\lambda_k = 1$. \vspace{0.1cm}\\
    {\hspace{2cm}\sc If} $f(x_{k+1},\theta_k+\lambda_k d_k) > f (x_{k+1},\theta_k) + \nu \lambda_k \nabla_{\theta} f(x_{k+1},\theta_k)^Td_k$\vspace{0.1cm}\\
    \hspace{2.5cm} Set $\lambda_k = \gamma\lambda_k$ and go to {\sc Step 2.2};\vspace{0.1cm}\\
    {\hspace{2cm}\sc Else} \vspace{0.1cm}\\
    \hspace{2.5cm} go to {\sc Step 2.3}.\vspace{0.1cm}
    \item[] {\hspace{0.25cm}\sc Step 2.3.} {\sc If} $f(x_{k+1},\hat{\theta}_k)<f(x_{k+1},\theta_k+\lambda_k d_k)$
    \vspace{0.1cm}\\
    {\hspace{2.6cm}Set} $\theta_{k+1}=\hat{\theta}_k$\\
    \vspace{0.1cm}
    {\hspace{2cm}\sc Else}
    \vspace{0.1cm}\\
    {\hspace{2.6cm}Set} $\theta_{k+1} = \theta_k + \lambda_k d_k.$
    \end{enumerate}
\end{enumerate}
\caption{Asymmetric alternating forward-backward method}
\label{alg:1}
\end{algorithm}
Unlike the standard alternating FB scheme \eqref{eq:alt_FB}, in which the fidelity term is used to perform the forward step in the updates of both $x_{k+1}$ and $\theta_{k+1}$, Algorithm \ref{alg:1} employs the fidelity term to perform the backward step when updating $x_{k+1}$ and the forward step when updating $\theta_{k+1}$. This asymmetry reflects the different analytical structures of the two subproblems and allows the use of effective yet different algorithms for the two updates. In more detail, the update of $x_{k+1}$ is carried out via one step of the so called forward-reflected-backward method \cite{Malitsky-etal-2020,Tseng2000,Wang-etal-2022}, whereas the update of $\theta_{k+1}$ consists of one iteration of a proximal gradient method with line search \cite{Bonettini-Loris-Porta-Prato-2016,Bonettini-etal-2009}. We explain below the motivations behind adopting these two different strategies.
\begin{description}
    \item[$x$-update] When minimizing the objective function in \eqref{eq:opt_prob_blind_super_res} with respect to $x$ while keeping $\theta$ fixed, the problem reduces to a super-resolution problem with a PnP regularization term. Following \cite{hurault2022gradient}, this problem can be efficiently solved by a FB algorithm of the form
    \begin{equation}\label{eq:PnpGD}
    x_{k+1} = \prox_{\alpha_xf(\cdot,\theta_k)}(x_k-\alpha_x\nabla \phi(x_k)),
    \end{equation}
    where $\alpha_x$ is a suitable positive steplength parameter. The proximal operator of $\alpha_x f(\cdot,\theta_k)$ can be computed efficiently in closed form. Although both $f(\cdot,\theta_k)$ and $\phi$ are smooth, an FB approach is preferable to a full gradient scheme for numerical and practical reasons. In particular, proximal steps may improve asymptotic performance \cite{Combettes-etal-2019}, the Lipschitz constant of $\nabla \phi + \nabla f$ can be much larger than that of $\nabla f$ \cite[Section~9.3]{Beck-2017}, leading to smaller admissible steplengths in a full gradient method, and the FB strategy combined with PnP regularization terms has shown strong empirical results in imaging applications \cite{hurault2022gradient}. In \cite{hurault2022gradient}, the authors additionally suggest applying a further \textit{denoising} step to the last FB iteration in order to enhance the quality of the reconstruction. Indeed it can be shown that $\phi$ in \eqref{eq:phi} induces a denoising operator $D_\sigma$ as
    $$
    D_\sigma(x) = x - \nabla \phi(x).
    $$
    As a consequence, in \cite{hurault2022gradient} the final reconstruction is defined as $D_\sigma(x_K)$, where $x_K$ is the last iterate generated by \eqref{eq:PnpGD}. To incorporate this heuristic within our alternating FB scheme while preserving convergence guarantees, we modify \eqref{eq:PnpGD} by including the reflected \textsc{Step 1.1}. This step can be interpreted as a generalized denoising operation and allows the $x$-update to fall within a class of methods for which convergence properties are established and inherited by our scheme. 
    \item[$\theta$-update] The regularization term on the unknown kernel naturally suggests the use of a gradient projection method to solve the subproblem associated with $\theta$. Specifically, we consider the algorithm proposed in \cite{Bonettini-etal-2017}. The projection onto the  set $\Omega$ defined in \eqref{eq:Omega} can be formulated as a root-finding problem and efficiently computed via the secant-like algorithm proposed in \cite{Dai-etal-2006}. The line search strategy on the additional parameter $\lambda_k\in(0,1]$ ensures a sufficient decrease of the objective function restricted to the $\theta$-block. This also avoids potentially demanding fine-tuning of the steplength parameter $\alpha_\theta$. {Since $f$ is a  continuously differentiable function with respect to $\theta$, the line search procedure in \textsc{Step 2.2} of Algorithm \ref{alg:1} is well defined \cite[Theorem 2.4]{Brigin-etal-2000}}.
\end{description}
The proposed asymmetric alternating FB method can be regarded as nonstandard in the existing literature and belongs to a research line that has not yet been thoroughly explored. This claim is supported by the following considerations.
While several standard block-coordinate FB methods for possibly nonconvex optimization have been proposed (see, for example, \cite{Bonettini-Prato-Rebegoldi-2018,Chouzenoux-etal-2016,Frankel-etal-2015,Gur-Sabach-Shtern-2023,Ochs-2019,Xu-Yin-2013,Xu-Yin-2017} and references therein), only \cite{Gur-etal-2022} considers an alternating minimization strategy in which each subproblem is solved using a different algorithm. 
Moreover, only a limited number of works address block-coordinate algorithms specifically tailored to PnP regularization terms \cite{Gan-etal-2023,Huang-etal-2025,Sun-etal-2020}. However, the approaches proposed in these works employ the same type of method to solve the subproblems associated with the block updates.
Finally, we are not aware of any block-coordinate methods involving forward-reflected-backward schemes.
\subsection{Convergence properties}
In this section, we present the convergence analysis of Algorithm \ref{alg:1} applied to a general optimization problem of the form 
\begin{equation}\label{eq:abstract_problem}
\underset{x\in\mathbb{R}^n,\theta\in\mathbb{R}^p}{\operatorname{argmin}} \ F(x,\theta)\equiv f(x,\theta) + \phi(x)  + \iota_{\Omega}(\theta),
\end{equation}
where $\Omega$ is a nonempty closed and convex subset of $\mathbb{R}^p$ and the functions $f$ and $\phi$ satisfy the following assumptions.
\begin{assumptions}\label{ass:1}
        \begin{itemize}
        \item[\textit{(i)}] The function $f:\mathbb{R}^n\times\mathbb{R}^p\rightarrow\mathbb{R}$ is continuously differentiable 
        with respect to all its arguments and $\nabla f$ is locally Lipschitz continuous, i.e., for all bounded subsets $K_1\times K_2\subseteq \R^n\times \R^p$ there exists $L(K_1,K_2)>0$ such that \begin{align}\label{eq:nablaf_Lipschitz}
\|\nabla f(x_1,\theta_1)&-\nabla f(x_2,\theta_2)\|\nonumber\\
&\leq L(K_1,K_2)\|(x_1-x_2,\theta_1-\theta_2)\|, \quad \forall \ x_1,x_2\in K_1, \ \forall \ \theta_1,\theta_2\in K_2. 
\end{align}
Furthermore, for any $x\in\R^n$, $\theta\in\R^p$, the partial gradients $\nabla_x f(\cdot,\theta)$ and $\nabla_\theta f(x,\cdot)$ are Lipschitz continuous, i.e., there exist $L_x(\theta)>0$ and $L_\theta(x)>0$ such that
        \begin{equation}\begin{aligned}\label{eq:block_Lipschitz}
            \|\nabla_{x} f(x_1,\theta)-\nabla_{x} f(x_2,\theta)\|&\leq L_x(\theta)\|x_1-x_2\|, \quad \forall \ x_1,x_2\in\R^n\\
            \|\nabla_{\theta} f(x,\theta_1)-\nabla_{\theta} f(x,\theta_2)\|&\leq L_\theta(x)\|\theta_1-\theta_2\|, \quad \forall \ \theta_1,\theta_2\in\R^p.
        \end{aligned}
        \end{equation}
        In addition, for each bounded subset $K_1\times K_2\subseteq \R^n\times \R^p$, there exist $L(K_1)>0$ and $L(K_2)>0$ such that
        \begin{equation}\label{eq:Lbound}
        \begin{aligned}
         \sup\{L_\theta(x): \ x\in K_1\}&\leq L(K_1)\\  
         \sup\{L_x(\theta): \ \theta\in K_2\}&\leq L(K_2).
        \end{aligned}   
        \end{equation}
        Finally, for every fixed $\theta$, the proximal operator of $f(\cdot,\theta)$ is single-valued.
        \item[\textit{(ii)}] The function $\phi:\mathbb{R}^n\rightarrow\mathbb{R}$ is continuously differentiable and $\nabla \phi$ is Lipschitz continuous, i.e., there exists $L_{\phi}>0$ such that  
\begin{equation*}\label{eq:phi_Lipschitz}
\|\nabla \phi(x)-\nabla \phi(y)\|\leq L_{\phi}\|x-y\|, \quad \forall \ x,y\in \mathbb{R}^n. 
\end{equation*}
        \item[\textit{(iii)}] The function $f+\phi$ is bounded from below.
    \end{itemize}
\end{assumptions}
\begin{remark}
The functions $f(x,\theta) = \frac{1}{2}\|S(\theta*x)-b\|^2$ and $\phi(x) = \frac{\lambda}{2}\|x-N_\sigma(x)\|^2$ in \eqref{eq:opt_prob_blind_super_res} satisfy Assumption \ref{ass:1}. Indeed $f$ is continuously differentiable on
$\mathbb{R}^n\times\mathbb{R}^p$, $\nabla f$ is locally Lipschitz continuous and its partial gradients are uniformly Lipschitz continuous. Moreover, 
for every fixed $\theta$, 
$f(\cdot,\theta)$ is convex. Therefore, the corresponding proximal subproblem
is strongly convex and admits a unique minimizer. Consequently, the proximal operator of $f(\cdot,\theta)$ is single-valued. {The assumption on $\phi$ holds whenever the neural network $N_\sigma$ is given by the composition of continuously differentiable functions with bounded and Lipschitz continuous derivatives \cite[Proposition 2]{hurault2022gradient}. This condition is satisfied by the convolutional neural network employed in the numerical experiments presented in Section \ref{sec:4}}. 
\end{remark}

In our analysis, we are interested in proving convergence to a stationary point of problem \eqref{eq:abstract_problem}. More precisely, let us consider the limiting subdifferential of the objective function $F$ at any point $(x,\theta)\in\R^n\times \R^p$ \cite[Definition 2.3]{Rockafellar-Wets-1998}, which by the block-coordinate structure of \eqref{eq:abstract_problem} and standard calculus rules \cite[Lemma 3]{Bonettini-Prato-Rebegoldi-2018} can be written as
\begin{equation}\label{eq:subdifferential_F}
\partial F(x,\theta)=\left\{\left(\begin{array}{c}
     \nabla_x f(x,\theta)+\nabla \phi(x)  \\
     \nabla_\theta f(x,\theta)+ w
\end{array}\right): \ w\in\partial \iota_\Omega(\theta)\right\},    
\end{equation}
where $\partial \iota_{\Omega}(\theta)=\{v\in\R^p: \ v^T(\theta'-\theta)\leq 0, \ \forall \theta'\in\R^p\}$ is the normal cone of $\Omega$ at $\theta$. Then, a stationary point $(x_*,\theta_*)\in\R^n\times\R^p$ for $F$ is defined by the following inclusion:
\begin{equation}\label{eq:stationary}
    0\in\partial F(x_*,\theta_*).
\end{equation}

We begin the convergence analysis by introducing a merit function $H:\R^n\times \R^n\times \R^p\rightarrow \R\cup\{+\infty\}$ defined as
\begin{align}\label{eq:merit}
H(x,y,\theta)&=F(x,\theta)+\frac{1}{4\alpha_x}\|x-y\|^2\nonumber\\
&=f(x,\theta)+\phi(x)+\iota_{\Omega}(\theta)+\frac{1}{4\alpha_x}\|x-y\|^2, \quad \forall \ (x,y,\theta)\in\R^n\times\R^n\times \R^p.
\end{align}
The merit function in \eqref{eq:merit} extends to the block-coordinate setting the one considered in \cite{Bot-etal-2016,Wang-etal-2022} for the convergence analysis of single-block forward-reflected-backward methods. Note that the subdifferential of the function $H$ writes as
\begin{equation}\label{eq:merit_subdifferential}
\partial H(x,y,\theta) = \left\{\left(
\begin{array}{c}
\nabla_x f(x,\theta)+\frac{1}{2\alpha_x}(x-y)+\nabla \phi(x)\\
\frac{1}{2\alpha_x}(y-x)\\
\nabla_\theta f(x,\theta)+w
\end{array}
\right): \ w\in \partial \iota_{\Omega}(\theta)\right\},
\end{equation}
and comparing \eqref{eq:merit_subdifferential} with \eqref{eq:subdifferential_F}, it is easy to see that if $(x_*,y_*,\theta_*)$ is stationary for $H$, then $(x_*,\theta_*)$ is stationary for $F$. 

The following lemma establishes a decreasing property for the merit function \eqref{eq:merit}. Indeed, although Algorithm \ref{alg:1} does not ensure the decrease of the objective function $F$ at each iteration (because of the reflected point $y_k$ in {\sc Step 1.1}), we can still prove that the merit function \eqref{eq:merit} decreases along the iterates. The first part of the proof is an adaptation of the arguments employed to prove \cite[Lemma 3.2]{Wang-etal-2022}. However, unlike in \cite{Wang-etal-2022}, we must take into account the block-coordinate structure of Algorithm \ref{alg:1} and the presence of a line search procedure in {\sc Step 2.2}.
\begin{lemma}\label{lem:H1}
Let Assumption \ref{ass:1} hold. Suppose that the sequence $\{(\xk,\theta_k)\}_{k\in\N}$ generated by Algorithm \ref{alg:1} is bounded. Define the sequence $\{z_k\}_{k\in\N}\subseteq \R^n\times \R^n$ as
\begin{equation*}\label{eq:zk}
z_k = \left(\begin{array}{c}
     x_{k+1}\\
     x_{k}
\end{array}\right), \quad \forall \ k\in\N,   
\end{equation*}
as well as the sequence $\{\bar{z}_k\}_{k\in\N} \subseteq \R^n\times \R^n\times \R^p$ given by
\begin{equation}\label{eq:barzk}
\bar{z}_k=\left(\begin{array}{c}
     x_{k}\\
     x_{k-1}\\
     \theta_k
\end{array}\right)= \left(\begin{array}{c}
     z_{k-1}\\
     \theta_k
\end{array}\right), \quad \forall \ k\in\N.
\end{equation}
If $\rho<\frac{1}{2L_\phi}$ and $\alpha_x<\frac{1-2L_\phi\rho}{2L_\phi}$, then  there exists $c>0$ such that
\begin{equation}\label{eq:sufficient_decrease}
H(\bar{z}_{k+1})\leq H(\bar{z}_{k}) - c\|\bar{z}_{k+1}-\bar{z}_k\|^2, \quad \forall \ k\in \N.
\end{equation}
\end{lemma}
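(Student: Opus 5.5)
The plan is to split the decrease of $H$ into an $x$-block part and a $\theta$-block part. Write
\[
H(\bar z_{k+1})-H(\bar z_k)=\bigl[H(x_{k+1},x_k,\theta_k)-H(x_k,x_{k-1},\theta_k)\bigr]+\bigl[F(x_{k+1},\theta_{k+1})-F(x_{k+1},\theta_k)\bigr].
\]
Each bracket will be bounded separately, and the two bounds summed.

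\textbf{The $x$-block.} I will adapt \cite[Lemma 3.2]{Wang-etal-2022}. Since $x_{k+1}$ minimizes $f(\cdot,\theta_k)+\frac{1}{2\alpha_x}\|\cdot-(y_k-\alpha_x\nabla\phi(x_k))\|^2$, comparing its value with the one at $x_k$ gives
\[
f(x_{k+1},\theta_k)+\tfrac{1}{2\alpha_x}\|x_{k+1}-x_k\|^2+\langle x_{k+1}-x_k,\nabla\phi(x_k)+\rho\alpha_x^{-1}(\nabla\phi(x_k)-\nabla\phi(x_{k-1}))\rangle\le f(x_k,\theta_k).
\]
Here I expand $\|x_{k+1}-y_k+\alpha_x\nabla\phi(x_k)\|^2$ versus $\|x_k-y_k+\alpha_x\nabla\phi(x_k)\|^2$ and use $y_k-x_k=\rho(\nabla\phi(x_{k-1})-\nabla\phi(x_k))$. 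Only minimality is needed, not convexity. Next I add the descent lemma $\phi(x_{k+1})\le\phi(x_k)+\langle\nabla\phi(x_k),x_{k+1}-x_k\rangle+\frac{L_\phi}{2}\|x_{k+1}-x_k\|^2$. The cross term I bound by Cauchy--Schwarz, Lipschitz continuity of $\nabla\phi$ and Young's inequality:
\[
\tfrac{\rho}{\alpha_x}L_\phi\|x_{k+1}-x_k\|\|x_k-x_{k-1}\|\le\tfrac{\rho L_\phi}{2\alpha_x}\bigl(\|x_{k+1}-x_k\|^2+\|x_k-x_{k-1}\|^2\bigr).
\]
This yields
\[
F(x_{k+1},\theta_k)\le F(x_k,\theta_k)-\Bigl(\tfrac{1}{2\alpha_x}-\tfrac{L_\phi}{2}-\tfrac{\rho L_\phi}{2\alpha_x}\Bigr)\|x_{k+1}-x_k\|^2+\tfrac{\rho L_\phi}{2\alpha_x}\|x_k-x_{k-1}\|^2.
\]
Adding $\frac{1}{4\alpha_x}\|x_{k+1}-x_k\|^2-\frac{1}{4\alpha_x}\|x_k-x_{k-1}\|^2$ converts this into a statement about $H$. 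The coefficients become $-\bigl(\frac{1}{4\alpha_x}-\frac{L_\phi}{2}-\frac{\rho L_\phi}{2\alpha_x}\bigr)$ on $\|x_{k+1}-x_k\|^2$ and $-\bigl(\frac{1}{4\alpha_x}-\frac{\rho L_\phi}{2\alpha_x}\bigr)$ on $\|x_k-x_{k-1}\|^2$. The first is positive exactly when $1-2\rho L_\phi>2\alpha_xL_\phi$, i.e.\ $\alpha_x<\frac{1-2L_\phi\rho}{2L_\phi}$. The second is positive when $\rho<\frac1{2L_\phi}$. So there is $c_1>0$ with
\[
H(x_{k+1},x_k,\theta_k)\le H(x_k,x_{k-1},\theta_k)-c_1\bigl(\|x_{k+1}-x_k\|^2+\|x_k-x_{k-1}\|^2\bigr).
\]
This controls both $x$-components of $\bar z_{k+1}-\bar z_k$.

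\textbf{The $\theta$-block.} This is the main obstacle: the Armijo line search gives decrease in terms of $\nabla_\theta f^Td_k$, and this must be turned into $-c_2\|\theta_{k+1}-\theta_k\|^2$ with $c_2$ independent of $k$. First, the projection characterization gives $\nabla_\theta f(x_{k+1},\theta_k)^Td_k\le-\frac1{\alpha_\theta}\|d_k\|^2$. Second, boundedness of the iterates lets me use \eqref{eq:Lbound}: $L_\theta(x_{k+1})\le\bar L$ uniformly. By the descent lemma, Armijo then holds as soon as $\lambda\le\frac{2(1-\nu)}{\alpha_\theta\bar L}$, so $\lambda_k\ge\lambda_{\min}:=\min\{1,2\gamma(1-\nu)/(\alpha_\theta\bar L)\}>0$. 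Hence
\[
f(x_{k+1},\theta_k+\lambda_kd_k)\le f(x_{k+1},\theta_k)-\tfrac{\nu\lambda_{\min}}{\alpha_\theta}\|d_k\|^2.
\]
Step 2.3 selects the point with the smaller $f$, so $f(x_{k+1},\theta_{k+1})$ satisfies the same bound. Both candidates lie in $\Omega$ by convexity, so the indicator term is zero. Finally, $\|\theta_{k+1}-\theta_k\|$ equals either $\|d_k\|$ or $\lambda_k\|d_k\|\le\|d_k\|$, which gives $F(x_{k+1},\theta_{k+1})\le F(x_{k+1},\theta_k)-c_2\|\theta_{k+1}-\theta_k\|^2$ with $c_2=\nu\lambda_{\min}/\alpha_\theta$.

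Summing the two estimates and noting $\|\bar z_{k+1}-\bar z_k\|^2=\|x_{k+1}-x_k\|^2+\|x_k-x_{k-1}\|^2+\|\theta_{k+1}-\theta_k\|^2$ gives \eqref{eq:sufficient_decrease} with $c=\min\{c_1,c_2\}$.
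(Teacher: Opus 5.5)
Your proposal is correct and follows the paper's proof essentially step by step. You use the same splitting into an $x$-block decrease (prox minimality compared at $x_k$, the descent lemma for $\phi$, Cauchy--Schwarz plus Young on the reflected term), which yields the same constant $\frac{1-2L_\phi(\alpha_x+\rho)}{4\alpha_x}$, and a $\theta$-block decrease via the Armijo condition, a uniform lower bound $\lambda_{\min}$ and the selection rule of Step 2.3. The only differences are cosmetic: you use the sharper projection inequality $\nabla_\theta f(x_{k+1},\theta_k)^Td_k\le-\frac{1}{\alpha_\theta}\|d_k\|^2$ where the paper uses $-\frac{1}{2\alpha_\theta}\|d_k\|^2$, and you derive $\lambda_{\min}$ explicitly from the descent lemma instead of citing the external line-search result.
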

\begin{proof}
From \textsc{Step 1.2} of Algorithm \ref{alg:1} and the definition of proximal operator of $\alpha_xf(\cdot,\theta_k)$, it holds that
\begin{equation}\label{eq:x_k+1}
\begin{aligned}
    x_{k+1} &= \argmin_{x\in\mathbb{R}^n} \frac{1}{2} \|x - (y_k-\alpha_x\nabla \phi(x_k))\|^2+\alpha_x f(x,\theta_k)\\
    &=\argmin_{x\in\mathbb{R}^n} \frac{1}{2\alpha_x}\|x-y_k\|^2+\nabla \phi(x_k)^T(x-y_k) + f(x,\theta_k).
    \end{aligned}
\end{equation}
As a consequence of \eqref{eq:x_k+1}, it follows that
\begin{equation*}
\begin{aligned}
    \frac{1}{2\alpha_x}\|x_{k+1}-y_k\|^2+\nabla \phi(x_k)^T(x_{k+1}-y_k) + f(x_{k+1},\theta_k)&\leq \frac{1}{2\alpha_x}\|x_{k}-y_k\|^2+ \\
    &\quad +\nabla \phi(x_k)^T(x_{k}-y_k) + f(x_{k},\theta_k)
    \end{aligned}
\end{equation*}
and, hence, 
\begin{equation}\label{eq:ineq1_f}
    \begin{aligned}
        f(x_{k+1},\theta_k)&\leq f(x_{k},\theta_k)+\nabla \phi(x_k)^T(x_{k}-x_{k+1}) + \frac{1}{2\alpha_x}\|x_{k}-y_k\|^2 \\
        &\quad - \frac{1}{2\alpha_x}\|x_{k+1}-y_k\|^2.
    \end{aligned}
\end{equation}
    In view of Assumption \ref{ass:1}{(ii)}, we can apply the descent lemma \cite[Theorem 18.15]{Bauschke-etal-2017} to $\phi$, which yields the inequality
    \begin{equation}\label{eq:ineq1_phi}
        \phi(x_{k+1})\leq \phi(x_k) + \nabla \phi(x_k)^T(x_{k+1}-x_k) + \frac{L_\phi}{2}\|x_{k+1}-x_k\|^2.
    \end{equation}
    Combining \eqref{eq:ineq1_f} and \eqref{eq:ineq1_phi} leads to
    \begin{equation*}
        \begin{aligned}
            f(x_{k+1},\theta_k)+\phi(x_{k+1}) &\leq
            f(x_k,\theta_k) + \phi(x_k) + \frac{L_\phi}{2}\|x_{k+1}-x_k\|^2 + \frac{1}{2\alpha_x}\|x_k-y_k\|^2\\
            &\quad -\frac{1}{2\alpha_x}\|x_{k+1}-y_k\|^2\\
            &= f(x_k,\theta_k) + \phi(x_k) + \frac{L_\phi}{2}\|x_{k+1}-x_k\|^2 -\frac{1}{2\alpha_x}\|x_k-x_{k+1}\|^2\\
            &\quad - \frac{1}{\alpha_x}(x_{k+1}-x_k)^T(x_k-y_k)\\
            &=f(x_k,\theta_k) + \phi(x_k) + \left(\frac{L_\phi}{2}-\frac{1}{2\alpha_x}\right)\|x_k-x_{k+1}\|^2 \\
            &\quad + \frac{1}{\alpha_k}(x_k-x_{k+1})^T(x_k-y_k)\\
            &\leq f(x_k,\theta_k) + \phi(x_k) + \left(\frac{L_\phi}{2}-\frac{1}{2\alpha_x}\right)\|x_k-x_{k+1}\|^2 \\
            &\quad + \frac{1}{\alpha_k}\|x_k-x_{k+1}\|\|x_k-y_k\|\\
            &= f(x_k,\theta_k) + \phi(x_k) + \left(\frac{L_\phi}{2}-\frac{1}{2\alpha_x}\right)\|x_k-x_{k+1}\|^2 \\
            &\quad +\frac{\rho}{\alpha_x}\|x_k-x_{k+1}\|\|\nabla \phi(x_{k-1})-\nabla \phi(x_k)\|\\
            &\leq f(x_k,\theta_k) + \phi(x_k) + \left(\frac{L_\phi(\alpha_x+\rho)}{2\alpha_x}-\frac{1}{2\alpha_x}\right)\|x_k-x_{k+1}\|^2 \\
            &\quad + \frac{L_\phi \rho}{2\alpha_x}\|x_k-x_{k-1}\|^2\\
            &=f(x_k,\theta_k) + \phi(x_k) + \left(\frac{L_\phi(\alpha_x+\rho)}{2\alpha_x}-\frac{1}{2\alpha_x}\right)\|z_k-z_{k-1}\|^2\\
            &\quad -\left(\frac{L_\phi(\alpha_x+\rho)}{2\alpha_x}-\frac{1}{2\alpha_x}\right)\|x_k-x_{k-1}\|^2+ \frac{L_\phi \rho}{2\alpha_x}\|x_k-x_{k-1}\|^2,
        \end{aligned}
    \end{equation*}
    where the first equality follows from the identity $\|x_k-y_k\|^2 = \|x_{k+1}-y_k\|^2-\|x_{k+1}-x_k\|^2 - 2(x_{k+1}-x_k)^T(x_k-y_k)$, the second inequality is a consequence of the Cauchy-Schwarz inequality, the third equality holds in view of \textsc{Step 1.1} of Algorithm \ref{alg:1}. Moreover the third inequality follows by combining Assumption \ref{ass:1}{(ii)} and the inequality $2ab\leq a^2+b^2$ and the fourth equality holds by recalling the definition of $z_k$ and the norm identity 
\begin{equation}\label{eq:norm_id}\|z_{k}-z_{k-1}\|^2 = \|x_{k+1}-x_k\|^2+\|x_k-x_{k-1}\|^2.\end{equation}
By neglecting the negative term $-\frac{L_\phi}{2}\|x_k-x_{k-1}\|^2$ in the right-hand side of the previous inequality, we can conclude that
    \begin{equation}\label{eq:ineq_f+phi}
        \begin{aligned}
      f(x_{k+1},\theta_k)+\phi(x_{k+1}) &\leq  f(x_k,\theta_k) + \phi(x_k) + \left(\frac{L_\phi(\alpha_x+\rho)}{2\alpha_x}-\frac{1}{2\alpha_x}\right)\|z_k-z_{k-1}\|^2 \\
            &\quad +\frac{1}{2\alpha_x}\|x_k-x_{k-1}\|^2.   
        \end{aligned}
    \end{equation}
    We are now able to establish a decreasing property of the merit function associated with \textsc{Step 1} of Algorithm \ref{alg:1}. Specifically,
    \begin{equation*}
        \begin{aligned}
            &H(z_k,\theta_k) = H(x_{k+1},x_k,\theta_k)  = f(x_{k+1},\theta_k)+\phi(x_{k+1})+\frac{1}{4\alpha_x}\|x_{k+1}-x_k\|^2\\
            &\leq f(x_k,\theta_k)+\phi(x_k)+ \left(\frac{L_\phi(\alpha_x+\rho)}{2\alpha_x}-\frac{1}{2\alpha_x}\right)\|z_k-z_{k-1}\|^2 +\frac{1}{2\alpha_x}\|x_k-x_{k-1}\|^2\\
            &\quad +\frac{1}{4\alpha_x}\|x_{k+1}-x_k\|^2\\
            &= f(x_k,\theta_k)+\phi(x_k) +\frac{1}{4\alpha_x}\|x_k-x_{k-1}\|^2 + \left(\frac{L_\phi(\alpha_x+\rho)}{2\alpha_x}-\frac{1}{4\alpha_x}\right)\|z_k-z_{k-1}\|^2\\
            &=H(z_{k-1},\theta_k) + \left(\frac{L_\phi(\alpha_x+\rho)}{2\alpha_x}-\frac{1}{4\alpha_x}\right)\|z_k-z_{k-1}\|^2,
        \end{aligned}
    \end{equation*}
    where the first inequality is a direct consequence of \eqref{eq:ineq_f+phi} and the last equality follows from the definition of the merit function in \eqref{eq:merit}. In view of the assumptions on $\alpha_x$ and $\rho$, it is easy to see that $\frac{L_\phi(\alpha_x+\rho)}{2\alpha_x}-\frac{1}{4\alpha_x}<0$ and, hence, we can conclude that there exist $c_x>0$ such that
    \begin{equation}\label{eq:dec_step1}
    H(z_k,\theta_k)\leq H(z_{k-1},\theta_k) - c_x\|z_k-z_{k-1}\|^2.
    \end{equation}
We now analyze \textsc{Step 2} of Algorithm \ref{alg:1}. By reasoning similarly to \eqref{eq:x_k+1}, we can write that
\begin{equation}\label{eq:min_theta}
\hat{\theta}_k = \argmin_{\theta\in\mathbb{R}^p} \nabla_\theta f(x_{k+1},\theta_k)^T(\theta-\theta_k) + \frac{1}{2\alpha_\theta}\|\theta-\theta_k\|^2+\iota_{\Omega}(\theta).
\end{equation}
Consequently,
$$
\nabla_\theta f(x_{k+1},\theta_k)^T(\hat{\theta}_k-\theta_k)+\frac{1}{2\alpha_\theta}\|\hat{\theta}_k-\theta_k\|^2\leq 0,
$$ 
and in view of the definition of $d_k$ in \textsc{Step 2.2}, we obtain
\begin{equation}\label{eq:d_k}
\nabla_\theta f(x_{k+1},\theta_k)^Td_k\leq -\frac{1}{2\alpha_\theta}\|d_k\|^2.
\end{equation}

Furthermore, let $K\subseteq \R^n$ be any bounded subset containing $\{x_k\}_{k\in\N}$. Since $\nabla_\theta f(x_{k+1},\cdot)$ is $L(K)-$Lipschitz continuous for all $k\in\N$, thanks to \eqref{eq:block_Lipschitz}-\eqref{eq:Lbound}, it is possible to prove that there exists $\lambda_{\min}>0$ dependent on $L(K)$ such that \cite[Proposition 3.2]{Bonettini-Loris-Porta-Prato-2016}
\begin{equation}\label{eq:lambdamin}
\lambda_k\geq \lambda_{\min}, \quad \forall \ k\in\N.
\end{equation}

Hence, by the line search inequality in \textsc{Step 2.2}, we observe that
\begin{equation}\label{eq:f_theta_k+1}
    \begin{aligned}
        f(x_{k+1},\theta_{k+1})&\leq f(x_{k+1},\theta_k)+\nu\lambda_k\nabla_\theta f(x_{k+1},\theta_k)^Td_k\\
        &\leq f(x_{k+1},\theta_k)-\frac{\nu\lambda_k}{2\alpha_\theta}\|d_k\|^2\\
        &\leq  f(x_{k+1},\theta_k)-\frac{\nu\lambda_k}{2\alpha_\theta}\|\theta_{k+1}-\theta_k\|^2\\
        &\leq f(x_{k+1},\theta_k)-\frac{\nu\lambda_{\min}}{2\alpha_\theta}\|\theta_{k+1}-\theta_k\|^2,
    \end{aligned}
\end{equation}
where the second inequality follows from \eqref{eq:d_k}, the third inequality from the property $\|\theta_{k+1}-\theta_k\|\leq \|d_k\|$, which is implied by \textsc{Step 2.3} and $\lambda_k\leq 1$, and the last inequality from \eqref{eq:lambdamin}. A decreasing property of the merit function associated with \textsc{Step 2} of Algorithm \ref{alg:1} can be obtained at this point. Indeed,
\begin{equation}\label{eq:dec_step2}
    \begin{aligned}
    H(z_k,\theta_{k+1}) &= H(x_{k+1},x_k,\theta_{k+1}) = f(x_{k+1},\theta_{k+1})+\phi(x_{k+1}) + \frac{1}{4\alpha_x}\|x_{k+1}-x_k\|^2\\
    &\leq f(x_{k+1},\theta_k)+\phi(x_{k+1})+\frac{1}{4\alpha_x}\|x_{k+1}-x_k\|^2 -\frac{\nu\lambda_{\min}}{2\alpha_\theta}\|\theta_{k+1}-\theta_k\|^2\\  
    &=H(z_k,\theta_k) - \frac{\nu\lambda_{\min}}{2\alpha_\theta}\|\theta_{k+1}-\theta_k\|^2,
    \end{aligned}
\end{equation}
 where the inequality follows from \eqref{eq:f_theta_k+1}. 
Denoting by $c_\theta$ the positive constant $\frac{\nu\lambda_{\min}}{2\alpha_\theta}$ and plugging \eqref{eq:dec_step1} into \eqref{eq:dec_step2}, we get
$$
H(z_k,\theta_{k+1})\leq H(z_{k-1},\theta_k)-c_x\|z_k-z_{k-1}\|^2 - c_\theta \|\theta_{k+1}-\theta_k\|^2.
$$
This last inequality combined with the definition of $\bar{z}_k$ and the norm identity 
\begin{equation*}
\begin{aligned}
\|\bar{z}_{k+1}-\bar{z}_k\|^2 &= \|(z_k,\theta_{k+1})-(z_{k-1},\theta_k)\|^2=\|(z_k-z_{k-1},\theta_{k+1}-\theta_k)\|^2\\
&=\|z_k-z_{k-1}\|^2+\|\theta_{k+1}-\theta_k\|^2    
\end{aligned}
\end{equation*}
yields
$$
H(\bar{z}_{k+1})\leq H(\bar{z}_k)-c\|\bar{z}_{k+1}-\bar{z}_k\|^2,
$$
where $c=\min\{c_x,c_\theta\}$.
\end{proof}

The next Lemma states that the norm of a subgradient $v_{k+1}$ computed at the iterate $\hat{z}_{k+1}=(x_{k+1}^T,x_{k}^T,\hat{\theta}_k^T)^T$ is upper bounded by a quantity proportional to the gap $\|\bar{z}_{k+1}-\bar{z}_k\|$ between two successive iterates generated by Algorithm \ref{alg:1}. A similar result was proved for a single-block forward-reflected-backward method in \cite[Lemma 3.5]{Wang-etal-2022}.

\begin{lemma}\label{lem:H3}
Let Assumption \ref{ass:1} hold. Suppose that the sequence $\{(\xk,\theta_k)\}_{k\in\N}$ generated by Algorithm \ref{alg:1} is bounded. Define the sequence $\{\hat{z}_k\}_{k\in\N}$ as
\begin{equation}\label{eq:tildezk}
\hat{z}_k=\left(\begin{array}{c}
     x_{k}\\
     x_{k-1}\\
     \hat{\theta}_{k-1}
\end{array}\right)
, \quad \forall \ k\in\N.
\end{equation}
If $\rho < \frac{1}{2L_\phi}$ and $\alpha_x<\frac{1-2L_\phi \rho}{2L_\phi}$, then there exists $q>0$ and a sequence of vectors $\{v_{k+1}\}_{k\in\N}$ such that $v_{k+1}\in\partial H(\hat{z}_{k+1})$ and
\begin{equation*}
\|v_{k+1}\|\leq q\|\bar{z}_{k+1}-\bar{z}_k\|, \quad \forall \ k\in\N.
\end{equation*}
\end{lemma}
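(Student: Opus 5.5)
We will build $v_{k+1}$ explicitly from the optimality conditions of the two subproblems solved at iteration $k$. Then we bound each block of $v_{k+1}$ by local Lipschitz estimates on the bounded set containing the iterates.

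\emph{$x$-block.} The first-order optimality condition of \eqref{eq:x_k+1} gives
\[
0=\tfrac{1}{\alpha_x}(x_{k+1}-y_k)+\nabla\phi(x_k)+\nabla_x f(x_{k+1},\theta_k).
\]
Hence
\[
\nabla_x f(x_{k+1},\theta_k)+\nabla\phi(x_{k+1})=\nabla\phi(x_{k+1})-\nabla\phi(x_k)-\tfrac{1}{\alpha_x}(x_{k+1}-x_k)+\tfrac{\rho}{\alpha_x}\big(\nabla\phi(x_{k-1})-\nabla\phi(x_k)\big),
\]
where we used \textsc{Step 1.1}. In \eqref{eq:merit_subdifferential} evaluated at $\hat z_{k+1}=(x_{k+1},x_k,\hat\theta_k)$, the first component must use $\hat\theta_k$, not $\theta_k$. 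We therefore set
\[
v^1_{k+1}=\nabla_x f(x_{k+1},\hat\theta_k)+\tfrac{1}{2\alpha_x}(x_{k+1}-x_k)+\nabla\phi(x_{k+1}),
\]
which equals the above expression plus $\nabla_x f(x_{k+1},\hat\theta_k)-\nabla_x f(x_{k+1},\theta_k)+\tfrac{1}{2\alpha_x}(x_{k+1}-x_k)$. The second component is simply $v^2_{k+1}=\tfrac{1}{2\alpha_x}(x_k-x_{k+1})$.

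\emph{$\theta$-block.} The optimality condition of \eqref{eq:min_theta} yields
\[
w_k:=-\nabla_\theta f(x_{k+1},\theta_k)-\tfrac{1}{\alpha_\theta}(\hat\theta_k-\theta_k)\in\partial\iota_\Omega(\hat\theta_k).
\]
So $v^3_{k+1}=\nabla_\theta f(x_{k+1},\hat\theta_k)+w_k=\nabla_\theta f(x_{k+1},\hat\theta_k)-\nabla_\theta f(x_{k+1},\theta_k)-\tfrac{1}{\alpha_\theta}d_k$ is admissible. Thus $v_{k+1}=(v^1_{k+1},v^2_{k+1},v^3_{k+1})\in\partial H(\hat z_{k+1})$.

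\emph{Bounds.} The iterates are bounded, and $\hat\theta_k=\theta_k+d_k\in\Omega$ stays bounded because $\|d_k\|$ is controlled below. So all points lie in a bounded set $K_1\times K_2$, and \eqref{eq:nablaf_Lipschitz} together with Assumption~\ref{ass:1}(ii) gives
\[
\|v^1_{k+1}\|\le (L_\phi+\tfrac{3}{2\alpha_x})\|x_{k+1}-x_k\|+\tfrac{\rho L_\phi}{\alpha_x}\|x_k-x_{k-1}\|+L\|d_k\|,
\qquad
\|v^3_{k+1}\|\le (L(K_1)+\tfrac{1}{\alpha_\theta})\|d_k\|.
\]
The $x$-differences are bounded by $\|z_k-z_{k-1}\|\le\|\bar z_{k+1}-\bar z_k\|$ via \eqref{eq:norm_id}.

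\emph{Main obstacle.} It remains to bound $\|d_k\|$ by $\|\theta_{k+1}-\theta_k\|$. \textsc{Step 2.3} gives two cases. If $\theta_{k+1}=\theta_k+\lambda_k d_k$, then $\|d_k\|=\lambda_k^{-1}\|\theta_{k+1}-\theta_k\|\le\lambda_{\min}^{-1}\|\theta_{k+1}-\theta_k\|$ by \eqref{eq:lambdamin}. If $\theta_{k+1}=\hat\theta_k$, then $\|d_k\|=\|\theta_{k+1}-\theta_k\|$ directly.

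In both cases $\|d_k\|\le\lambda_{\min}^{-1}\|\theta_{k+1}-\theta_k\|\le\lambda_{\min}^{-1}\|\bar z_{k+1}-\bar z_k\|$. Collecting the constants gives $q$. The same bound on $\|d_k\|$ also justifies the boundedness of $\{\hat\theta_k\}$; in any case $\hat\theta_k$ lies in $\Omega$ and near the bounded $\theta_k$. The step requiring care is making sure the Lipschitz constants used, $L(K_1,K_2)$ and $L(K_1)$ from \eqref{eq:Lbound}, are uniform in $k$. This follows because every point involved lies in one fixed bounded set.
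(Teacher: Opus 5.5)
Your proposal is correct and follows essentially the same route as the paper: it uses the same subgradient $v_{k+1}$ built from the optimality conditions of the prox step in \textsc{Step 1.2} and the projection in \textsc{Step 2.1}, the same Lipschitz estimates on a bounded set containing $(x_k,\theta_k)$ and $(x_k,\hat\theta_k)$, and the same bound $\|\hat\theta_k-\theta_k\|\le\lambda_{\min}^{-1}\|\theta_{k+1}-\theta_k\|$ obtained from \textsc{Step 2.3} and the uniform lower bound $\lambda_k\ge\lambda_{\min}$. The only minor difference is how boundedness of $\{\hat\theta_k\}$ is justified. You derive it from that last bound, while the paper appeals to continuity of the projected-gradient map; your argument is equally valid and not circular, because $\lambda_{\min}$ depends only on the bound for the $x_k$.
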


\begin{proof}
Before proceeding with the proof, we observe that the sequence of the projected-gradient points $\{\hat{\theta}_k\}_{k\in\N}$ is bounded, thanks to the boundedness assumption on $\{(x_k,\theta_k)\}_{k\in\N}$, {\sc Step 2.1} of Algorithm \ref{alg:1} and the continuity of the projected-gradient operator with respect to the variables $x$ and $\theta$. Then, we let $K_1\times K_2\subseteq \R^n\times \R^p$ be any bounded subset such that
\begin{equation}\label{eq:K}
\{(x_k,\theta_k)\}_{k\in\N}\cup  \{(x_k,\hat{\theta}_k)\}_{k\in\N}\subseteq K_1\times K_2. 
\end{equation}
This is crucial as we want to apply Assumption \ref{ass:1}(i)) on a bounded subset to which the iterates of interest belong to.

Based on the optimality condition associated to the minimum problem \eqref{eq:min_theta} for computing the iterate $\hat{\theta}_{k}$, there exists $w_{k+1}\in \partial \iota_{\Omega}(\hat{\theta}_k)$ such that
\begin{equation*}
\nabla_\theta f(x_{k+1},{{\theta}_k})
+\frac{1}{\alpha_\theta}(\hat{\theta}_k-\theta_k)+w_{k+1}=0,
\end{equation*}
or equivalently
\begin{equation}\label{eq:wk}
w_{k+1}=
-\nabla_\theta f(x_{k+1},{{\theta}_k})
+\frac{1}{\alpha_\theta}(\theta_k-\hat{\theta}_k)\in\partial \iota_{\Omega}(\hat{\theta}_k).
\end{equation}
Furthermore, for all $k\in\N$, we define the vector
\begin{equation}
v_{k+1}=\left(
\begin{array}{c}
\nabla_x f(x_{k+1},\hat{\theta}_k)+\frac{1}{2\alpha_x}(x_{k+1}-x_k)+\nabla \phi(x_{k+1})\\
\frac{1}{2\alpha_x}(x_k-x_{k+1})\\
\nabla_\theta f(x_{k+1},\hat{\theta}_k)
+w_{k+1}
\end{array}
\right).
\end{equation}
By writing explicitly the subdifferential \eqref{eq:merit_subdifferential} at the point $\hat{z}_{k+1}=(x_{k+1},x_k,\hat{\theta}_k)$ and recalling from \eqref{eq:wk} that $w_{k+1}\in\partial \iota_{\Omega}(\hat{\theta}_k)$, it follows that $v_{k+1}\in\partial H(\hat{z}_{k+1})$. Then, the norm of the subgradient $v_{k+1}$ is upper bounded as follows
\begin{align}\label{eq:tech1}
\|v_{k+1}\|&\leq \left\|\nabla_x f(x_{k+1},\hat{\theta}_k)+\frac{1}{2\alpha_x}(x_{k+1}-x_k)+\nabla \phi(x_{k+1})\right\|\nonumber\\
&+\frac{1}{2\alpha_x}\|x_k-x_{k+1}\|+\|\nabla_\theta f(x_{k+1},\hat{\theta}_k)+w_{k+1}\|\nonumber\\
&=\left\|\nabla_x f(x_{k+1},\hat{\theta}_k)+\frac{1}{2\alpha_x}(x_{k+1}-x_k)+\nabla \phi(x_{k+1})\right\|\nonumber\\
&+\frac{1}{2\alpha_x}\|x_k-x_{k+1}\|+
{\left\|\nabla_\theta f(x_{k+1},\hat{\theta}_k) - \nabla_\theta f(x_{k+1},\theta_k) + \frac{1}{\alpha_\theta}(\theta_k-\hat{\theta}_k)\right\|}\nonumber\\
&\leq \left\|\nabla_x f(x_{k+1},\hat{\theta}_k)+\frac{1}{2\alpha_x}(x_{k+1}-x_k)+\nabla \phi(x_{k+1})\right\|\nonumber\\
&+\frac{1}{2\alpha_x}\|x_k-x_{k+1}\|+
{\left(L(K_1)+\frac{1}{\alpha_\theta}\right)\|\hat{\theta}_k-\theta_k\|},
\end{align}
where the equality is obtained by replacing $w_{k+1}$ with \eqref{eq:wk} {and the second inequality follows from the triangular inequality and Assumption \ref{ass:1}(i), together with the definition of the upper bound $L(K_2)$ in \eqref{eq:Lbound}, with $K_2$ given in \eqref{eq:K}}. By summing and subtracting $\nabla_x f(x_{k+1},\theta_k)$ in the first norm of the right-hand side of \eqref{eq:tech1}, and applying the triangular inequality, we get
\begin{align}\label{eq:tech2}
\|v_{k+1}\|&\leq  \left\|\nabla_x f(x_{k+1},\hat{\theta}_k)- \nabla_x f(x_{k+1},\theta_k)\right\|\nonumber\\
&+\left\|\nabla_x f(x_{k+1},\theta_k)+\frac{1}{2\alpha_x}(x_{k+1}-x_k)+\nabla \phi(x_{k+1})\right\|\nonumber\\
&+\frac{1}{2\alpha_x}\|x_k-x_{k+1}\|+{\left(L(K_1) + \frac{1}{\alpha_\theta}\right)}\|\hat{\theta}_k-\theta_k\|\nonumber\\
&\leq \left({L(K_1)+L(K_1,K_2)}+ \frac{1}{\alpha_\theta}\right)\|\hat{\theta}_k-\theta_k\|+\frac{1}{2\alpha_x}\|x_k-x_{k+1}\|\nonumber\\
&+\left\|\nabla_x f(x_{k+1},\theta_k)+\frac{1}{2\alpha_x}(x_{k+1}-x_k)+\nabla \phi(x_{k+1})\right\|,
\end{align}
being the second inequality a consequence of Assumption \ref{ass:1}(i), in particular of {the local Lipschitz continuity of $\nabla f$ in the bounded set $K_1\times K_2$ defined in \eqref{eq:K}}. From the optimality condition associated to the computation of the iterate $x_{k+1}$, see \eqref{eq:x_k+1}, we can write the following equality
\begin{equation}\label{eq:tech3}
    \nabla_x f(x_{k+1},\theta_k) = \frac{1}{\alpha_x}(y_k-x_{k+1})-\nabla\phi(x_k).
\end{equation}
We plug \eqref{eq:tech3} in \eqref{eq:tech2} and use again the triangular inequality, thus obtaining
\begin{align}\label{eq:tech4}
\|v_{k+1}\|&\leq \left({L(K_1)+L(K_1,K_2)}+\frac{1}{\alpha_\theta}\right)\|\hat{\theta}_k-\theta_k\|+\frac{1}{\alpha_x}(\|x_{k+1}-x_k\|+\|y_k-x_{k+1}\|)\nonumber\\
&+\|\nabla \phi(x_{k+1})-\nabla \phi(x_k)\|\nonumber\\
&\leq \frac{1}{\lambda_k}\left({L(K_1)+L(K_1,K_2)}+\frac{1}{\alpha_\theta}\right)\|\theta_{k+1}-\theta_k\|+\frac{2}{\alpha_x}\|x_{k+1}-x_k\|\nonumber\\
&+\frac{\rho}{\alpha_x}\|\nabla \phi(x_k)-\nabla \phi(x_{k-1})\|+\|\nabla \phi(x_{k+1})-\nabla \phi(x_k)\|\nonumber\\
&\leq \frac{1}{\lambda_k}\left({L(K_1)+L(K_1,K_2)}+\frac{1}{\alpha_\theta}\right)\|\theta_{k+1}-\theta_k\|+\left(\frac{2}{\alpha_x}+L_\phi\right)\|x_{k+1}-x_k\|\nonumber\\
&+\frac{\rho L_\phi}{\alpha_x}\|x_k-x_{k-1}\|
\end{align}
where the second inequality follows by {\sc Step  1.1}, {\sc Step 2.2}, {\sc Step 2.3} of Algorithm \ref{alg:1} and the triangular inequality, and the third is due to the Lipschitz continuity of $\nabla \phi$ (Assumption \ref{ass:1}(ii)).

Then, applying \eqref{eq:lambdamin} to \eqref{eq:tech4} allows us to obtain the following upper bound
\begin{align*}
\|v_{k+1}\|&\leq \frac{1}{\lambda_{\min}}\left({L(K_1)+L(K_1,K_2)}+\frac{1}{\alpha_\theta}\right)\|\theta_{k+1}-\theta_k\|+\left(\frac{2}{\alpha_x}+L_\phi\right)\|x_{k+1}-x_k\|\nonumber\\
&+\frac{\rho L_\phi}{\alpha_x}\|x_k-x_{k-1}\|\\
&\leq \left(\frac{1}{\lambda_{\min}}\left({L(K_1)+L(K_1,K_2)}+\frac{1}{\alpha_\theta}\right)+\left(\frac{2}{\alpha_x}+L_\phi\right)+\frac{\rho L_\phi}{\alpha_x}\right)\|\bar{z}_{k+1}-\bar{z}_k\|,
\end{align*}
where the second inequality follows by recalling the definition and block-structure of the sequence $\{\bar{z}_k\}_{k\in\N}$ in \eqref{eq:barzk}. Hence, by setting $q =  \frac{1}{\lambda_{\min}}\left({L(K_1)+L(K_1,K_2)}+\frac{1}{\alpha_\theta}\right)+\left(\frac{2}{\alpha_x}+L_\phi\right)+\frac{\rho L_\phi}{\alpha_x}$, the thesis follows.
\end{proof}

Next, we show that the value of the merit function \eqref{eq:merit} evaluated at $\hat{z}_{k+1}$ is controlled from above and below by the value of the same function at $\bar{z}_{k+1}$. 

\begin{lemma}\label{lem:H2}
Let Assumption \ref{ass:1} hold. Suppose that the sequence $\{(\xk,\theta_k)\}_{k\in\N}$ generated by Algorithm \ref{alg:1} is bounded. Let $\{\bar{z}_k\}_{k\in\N}$ and $\{\hat{z}_k\}_{k\in\N}$ be defined as in \eqref{eq:barzk} and \eqref{eq:tildezk}, respectively. Then, there exists a sequence of non-negative real numbers $\{t_k\}_{k\in\N}$ such that $\lim_{k\rightarrow \infty}t_k=0$ and
\begin{equation}\label{eq:H2}
H(\bar{z}_{k+1})\leq H(\hat{z}_{k+1})\leq H(\bar{z}_k)+t_k, \quad \forall \ k\in\N. 
\end{equation}
\end{lemma}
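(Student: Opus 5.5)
Both inequalities only involve the third block, because $\bar z_{k+1}=(x_{k+1},x_k,\theta_{k+1})$ and $\hat z_{k+1}=(x_{k+1},x_k,\hat\theta_k)$ share the first two blocks. Also $\theta_{k+1},\hat\theta_k\in\Omega$: we have $\theta_0\in\Omega$, $\hat\theta_k$ is a projection onto $\Omega$, and $\theta_k+\lambda_kd_k$ is a convex combination of $\theta_k$ and $\hat\theta_k$ with $\lambda_k\in(0,1]$. Hence the indicator terms vanish and
\[
H(\hat z_{k+1})-H(\bar z_{k+1}) = f(x_{k+1},\hat\theta_k)-f(x_{k+1},\theta_{k+1}).
\]

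\textbf{Left inequality.} I would use \textsc{Step 2.3}. If $f(x_{k+1},\hat\theta_k)<f(x_{k+1},\theta_k+\lambda_kd_k)$, the algorithm sets $\theta_{k+1}=\hat\theta_k$, so the difference above is zero. Otherwise $f(x_{k+1},\hat\theta_k)\ge f(x_{k+1},\theta_k+\lambda_kd_k)=f(x_{k+1},\theta_{k+1})$. In both cases $H(\bar z_{k+1})\le H(\hat z_{k+1})$.

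\textbf{Right inequality.} I would bound $f(x_{k+1},\hat\theta_k)$ by the descent lemma in $\theta$. By \eqref{eq:block_Lipschitz}--\eqref{eq:Lbound}, with $K_1$ bounded and containing $\{x_k\}$, the constant $L(K_1)$ bounds $L_\theta(x_{k+1})$. This gives
\[
f(x_{k+1},\hat\theta_k)\le f(x_{k+1},\theta_k)+\nabla_\theta f(x_{k+1},\theta_k)^Td_k+\tfrac{L(K_1)}{2}\|d_k\|^2\le f(x_{k+1},\theta_k)+\tfrac{L(K_1)}{2}\|d_k\|^2,
\]
where the second inequality uses \eqref{eq:d_k}, which makes the linear term nonpositive. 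Adding $\phi(x_{k+1})+\frac{1}{4\alpha_x}\|x_{k+1}-x_k\|^2$ to both sides gives $H(\hat z_{k+1})\le H(z_k,\theta_k)+\frac{L(K_1)}{2}\|d_k\|^2$. By \eqref{eq:dec_step1} from the proof of Lemma \ref{lem:H1}, $H(z_k,\theta_k)\le H(z_{k-1},\theta_k)=H(\bar z_k)$. So the right inequality holds with
\[
t_k=\tfrac{L(K_1)}{2}\|d_k\|^2\ge 0.
\]

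\textbf{Showing $t_k\to 0$.} This is the main step. Both branches of \textsc{Step 2.3} give $\theta_{k+1}-\theta_k\in\{d_k,\lambda_kd_k\}$, so $\|\theta_{k+1}-\theta_k\|\ge\lambda_k\|d_k\|\ge\lambda_{\min}\|d_k\|$ by \eqref{eq:lambdamin}. It therefore suffices that $\|\bar z_{k+1}-\bar z_k\|\to 0$. For this I sum \eqref{eq:sufficient_decrease}. The sequence $H(\bar z_k)$ is nonincreasing, and it is bounded below because $f+\phi$ is bounded below (Assumption \ref{ass:1}(iii)) and the quadratic term and the indicator are nonnegative. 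Hence $\sum_k\|\bar z_{k+1}-\bar z_k\|^2<\infty$, so $\|\theta_{k+1}-\theta_k\|\to 0$, hence $\|d_k\|\to0$ and $t_k\to0$.

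The only delicate point is to bound by $\|d_k\|$ rather than by $\|\theta_{k+1}-\theta_k\|$ directly. This is what makes the estimate uniform across the two branches of \textsc{Step 2.3}, and it relies on $\lambda_{\min}>0$.
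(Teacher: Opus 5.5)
Your proof is correct. The left inequality and the argument for $t_k\to 0$ coincide with the paper's: both use summability of $\|\bar{z}_{k+1}-\bar{z}_k\|^2$ from \eqref{eq:sufficient_decrease} together with $\lambda_k\ge\lambda_{\min}$. You reach the right inequality by a genuinely different route.

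After the descent lemma in $\theta$ and \eqref{eq:d_k}, the paper does not reuse the $x$-block decrease. It applies the descent lemma separately to $\phi$ and to $f(\cdot,\theta_k)$. This leaves the linear term $(\nabla_x f(x_k,\theta_k)+\nabla\phi(x_k))^T(x_{k+1}-x_k)$, which it bounds by $\Lambda\|x_{k+1}-x_k\|$ using boundedness of the iterates. The result is the $t_k$ of \eqref{eq:tk}, which involves $L(K_2)$, $\Lambda$ and a term linear in $\|\bar{z}_{k+1}-\bar{z}_k\|$.

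You instead invoke \eqref{eq:dec_step1}, so the whole $x$-contribution is absorbed and $t_k=\frac{L(K_1)}{2}\|d_k\|^2\le\frac{L(K_1)}{2\lambda_{\min}^2}\|\theta_{k+1}-\theta_k\|^2$. This gives a shorter argument with fewer constants, and a $t_k$ that is summable, not merely vanishing. Your bound $\lambda_{\min}\|d_k\|\le\|\theta_{k+1}-\theta_k\|$ also correctly produces $\lambda_{\min}^{-2}$. The paper writes $1/\lambda_k$ where $1/\lambda_k^2$ is needed, which is harmless for $t_k\to0$.

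The paper's route has its own advantage: its estimate $H(\hat{z}_{k+1})\le H(\bar{z}_k)+t_k$ holds for arbitrary $\rho$ and $\alpha_x$. Yours already needs $\rho<\frac{1}{2L_\phi}$ and $\alpha_x<\frac{1-2L_\phi\rho}{2L_\phi}$ at that stage, since \eqref{eq:dec_step1} is proved under them. These conditions are missing from the lemma's statement, but the paper's proof of $t_k\to0$ also needs them through \eqref{eq:sufficient_decrease}, so you lose nothing. You should, however, state explicitly that you work under the hypotheses of Lemma \ref{lem:H1}.
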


\begin{proof}
Starting from the value of $H(\bar{z}_{k+1})$, we immediately note that
\begin{align}
H(\bar{z}_{k+1})&=H(x_{k+1},x_k,\theta_{k+1})\nonumber\\
&=f(x_{k+1},\theta_{k+1})+\phi(x_{k+1})+\iota_{\Omega}(\theta_{k+1})+\frac{1}{4\alpha_x}\|x_{k+1}-x_k\|^2\nonumber\\
&\leq f(x_{k+1},\hat{\theta}_k)+\phi(x_{k+1})+\iota_{\Omega}(\hat{\theta}_{k})+\frac{1}{4\alpha_x}\|x_{k+1}-x_k\|^2\label{eq:H2_tech1}\\
&=H(x_{k+1},x_k,\hat{\theta}_k)=H(\hat{z}_{k+1}),\nonumber
\end{align}
where inequality \eqref{eq:H2_tech1} holds because of {\sc Step 2.3} of Algorithm \ref{alg:1} and the fact that both $\theta_{k+1}$ and $\hat{\theta}_k$ belong to $\Omega$. Thus, the left-hand inequality in \eqref{eq:H2} holds.

On the other hand, by using the block-Lipschitz continuity of $\nabla f$ stated in \eqref{eq:block_Lipschitz}, the upper bounds $L(K_1)$ and $L(K_2)$ given in \eqref{eq:Lbound} on the block-Lipschitz constants of $\nabla_x f(\cdot,\theta)$ and $\nabla_\theta f(x,\cdot)$ for all $(x,\theta)\in K_1\times K_2$, where $K_1\times K_2$ is defined as in \eqref{eq:K}, and the Lipschitz continuity of $\nabla \phi$, we have
\begin{align}
H(\hat{z}_{k+1})&=H(x_{k+1},x_k,\hat{\theta}_k)\nonumber\\
&=f(x_{k+1},\hat{\theta}_{k})+\phi(x_{k+1})+\iota_{\Omega}(\hat{\theta}_{k})+\frac{1}{4\alpha_x}\|x_{k+1}-x_k\|^2\nonumber\\
& \leq f(x_{k+1},\theta_k)+\nabla_\theta f(x_{k+1},\theta_k)^T(\hat{\theta}_k-\theta_k)+\frac{L(K_1)}{2}\|\hat{\theta}_k-\theta_k\|^2\nonumber\\
&+\phi(x_k)+\nabla \phi(x_k)^T(x_{k+1}-x_k)+\frac{L_\phi}{2}\|x_{k+1}-x_k\|^2+\iota_{\Omega}(\theta_{k})+\frac{1}{4\alpha_x}\|x_{k+1}-x_k\|^2\nonumber\\
&\leq f(x_{k+1},\theta_k)+\phi(x_k)+\iota_{\Omega}(\theta_k)+\frac{1}{2}\left(L_\phi+\frac{1}{2\alpha_x}\right)\|x_{k+1}-x_k\|^2\nonumber\\
&+\frac{L(K_1)}{2}\|\hat{\theta}_k-\theta_k\|^2+\nabla \phi(x_k)^T(x_{k+1}-x_k)\nonumber\\
&\leq f(x_k,\theta_k)+\nabla_x f(x_k,\theta_k)^T(x_{k+1}-x_k)+\frac{L(K_2)}{2}\|x_{k+1}-x_k\|^2+\phi(x_k)+\iota_{\Omega}(\theta_k)\nonumber\\
&+\frac{1}{2}\left(L_\phi+\frac{1}{2\alpha_x}\right)\|x_{k+1}-x_k\|^2+\frac{L(K_1)}{2}\|\hat{\theta}_k-\theta_k\|^2+\nabla \phi(x_k)^T(x_{k+1}-x_k)\nonumber\\
&\leq H(\bar{z}_k)+\frac{1}{2}\left(L_{\phi}+L(K_2)+\frac{1}{2\alpha_x}\right)\|x_{k+1}-x_k\|^2+\frac{L(K_1)}{2}\|\hat{\theta}_k-\theta_k\|^2\nonumber\\
&+(\|\nabla_x f(x_k,\theta_k)\|+\|\nabla \phi(x_k)\|)\|x_{k+1}-x_k\|.\label{eq:long_tech1}
\end{align}
In the above chain of inequalities, the first inequality is deduced by applying the descent lemma \cite[Theorem 18.15]{Bauschke-etal-2017} to {both the functions $f(x_{k+1},\cdot)$ and $\phi(\cdot)$}, and recalling that the block-Lipschitz constants of $\nabla_\theta f(x,\cdot)$ are bounded from above by $L(K_1)$ for all $x\in K_1$ (see \eqref{eq:Lbound}), being $K_1$ defined as in \eqref{eq:K}; the second inequality follows from \eqref{eq:d_k}; the third is obtained by applying again the descent lemma to the function $f(\cdot,\theta_k)$ and exploiting the bound $L(K_2)$ given in \eqref{eq:Lbound}, being $K_2$ defined as in \eqref{eq:K}; the fourth is trivially deduced by adding the term $\frac{1}{4\alpha_x}\|x_k-x_{k-1}\|^{{2}}$ and applying the Cauchy-Schwarz inequality.

Since $\nabla_x f(\cdot,\theta)$ is $L(K_2)-$Lipschitz continuous for all $\theta\in K_2$, $\nabla \phi$ is Lipschitz continuous, and the sequence $\{(x_k,\theta_k)\}_{k\in\N}$ is bounded, there exists a constant $\Lambda>0$ such that
\begin{equation*}
\|\nabla_x f(x_k,\theta_k)\|+\|\nabla \phi(x_k)\|\leq \Lambda.   
\end{equation*}
Hence, inequality \eqref{eq:long_tech1} entails
\begin{align*}
H(\hat{z}_{k+1})& \leq H(\bar{z}_k)+\frac{1}{2}\left(L_{\phi}+L(K_2)+\frac{1}{2\alpha_x}\right)\|x_{k+1}-x_k\|^2+\frac{L(K_1)}{2}\|\hat{\theta}_k-\theta_k\|^2\nonumber\\
&+\Lambda\|x_{k+1}-x_k\|\\
&\leq H(\bar{z}_k)+\frac{1}{2}\left(L_{\phi}+L(K_2)+\frac{1}{2\alpha_x}\right)\|x_{k+1}-x_k\|^2+\frac{L(K_1)}{2\lambda_k}\|\theta_{k+1}-\theta_k\|^2\nonumber\\
&+\Lambda\|x_{k+1}-x_k\|\\
&\leq H(\bar{z}_k)+\frac{1}{2}\left(L_{\phi}+\frac{L(K_1)}{\lambda_{\min}}+L(K_2)+\frac{1}{2\alpha_x}\right)\|\bar{z}_{k+1}-\bar{z}_k\|^2+\Lambda\|\bar{z}_{k+1}-\bar{z}_k\|,
\end{align*}
where the second inequality follows from {\sc Step 2.3} of Algorithm \ref{alg:1} {and $\lambda_k\leq 1$,} and the last inequality is due to \eqref{eq:lambdamin} and the definition of the sequence $\{\bar{z}_k\}_{k\in\N}$ given in \eqref{eq:barzk}.

Hence, the following relation has been obtained
\begin{equation*}
    H(\hat{z}_{k+1})\leq H(\bar{z}_k)+t_k,
\end{equation*}
where
\begin{equation}\label{eq:tk}
t_k = \frac{1}{2}\left(L_{\phi}+\frac{L(K_1)}{\lambda_{\min}}+L(K_2)+\frac{1}{2\alpha_x}\right)\|\bar{z}_{k+1}-\bar{z}_k\|^2+\Lambda\|\bar{z}_{k+1}-\bar{z}_k\|, \quad \forall \ k\in\N.    
\end{equation}
Finally, letting $H_{low}\in\R$ be any constant such that $H(x,y,\theta)\geq H_{low}$ for all $(x,y,\theta)\in\R^n\times\R^n\times \R^p$, which exists thanks to Assumption \ref{ass:1}(iii), and summing inequality \eqref{eq:sufficient_decrease} for $k=0,\ldots,K$, we come to
\begin{equation*}
    \sum_{k=0}^{K}\|\bar{z}_{k+1}-\bar{z}_k\|^2\leq \frac{1}{c}(H(\bar{z}_0)-H(\bar{z}_{K+1}))\leq \frac{1}{c}(H(\bar{z}_0)-H_{low}),
\end{equation*}
and as a result
\begin{equation}\label{eq:sumz}
\sum_{k=0}^{K}\|\bar{z}_{k+1}-\bar{z}_k\|^2<\infty,    
\end{equation}
which allows us to conclude that the sequence $\{t_k\}_{k\in\N}$ defined in \eqref{eq:tk} is converging to zero. Therefore, the right-hand inequality in \eqref{eq:H2} holds and the proof is complete.
\end{proof}

We are now ready to prove the convergence of Algorithm \ref{alg:1}. In order to obtain the desired result, we will assume that the merit function $H$ in \eqref{eq:merit} satisfies the Kurdyka--\L{}ojasiewicz inequality (KL) at any point $(x_*,y_*,\theta_*)\in\R^n\times \R^n\times \R^p$ \cite[Definition 3]{Bolte-etal-2014}, i.e., there exists $\upsilon \in (0,+\infty]$, a neighborhood $\mathcal{U}$ of  $(x_*,y_*,\theta_*)$ and a continuous concave function $\xi:[0,\upsilon)\longrightarrow [0,+\infty)$ with $\xi(0) = 0$, $\xi\in C^1(0,\upsilon)$, $\xi'(s) > 0$ for all $s \in (0,\upsilon)$, such that 
    \begin{equation}\label{eq:KL}
        \xi'(H(x,y,\theta)-H(x_*,y_*,\theta_*)) \operatorname{dist}(0,\partial H(x,y,\theta)) \geq 1,
    \end{equation}
    for all $(x,y,\theta)\in \mathcal{U}\cap \{(x,y,\theta)\in\R^n: \ F(x_*,y_*,\theta_*)<F(x,y,\theta)<F(x_*,y_*,\theta_*)+ \upsilon \}$.

\begin{theorem}
Let Assumption \ref{ass:1} hold. Suppose that the sequence $\{(\xk,\theta_k)\}_{k\in\N}$ generated by Algorithm \ref{alg:1} is bounded. If $\rho < \frac{1}{2L_\phi}$ and $\alpha_x<\frac{1-2L_\phi \rho}{2L_\phi}$, and the merit function \eqref{eq:merit} satisfies the KL inequality \eqref{eq:KL} on its domain, then the sequence $\{(\xk,\theta_k)\}_{k\in\N}$ converges to a stationary point of problem \eqref{eq:abstract_problem}.
\end{theorem}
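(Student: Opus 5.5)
The plan is to follow the abstract convergence framework for descent methods under the KL property, in the form where the subgradient is evaluated at an auxiliary point rather than at the iterate itself (in the spirit of \cite{Bolte-etal-2014} and its variants with a perturbed value sequence, e.g.\ \cite{Bonettini-Prato-Rebegoldi-2018}). The three ingredients are already available. Lemma \ref{lem:H1} gives sufficient decrease of $H$ along $\{\bar z_k\}$. Lemma \ref{lem:H3} gives the relative error bound at $\hat z_{k+1}$. Lemma \ref{lem:H2} gives the sandwich $H(\bar z_{k+1})\le H(\hat z_{k+1})\le H(\bar z_k)+t_k$ with $t_k\to0$.

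\textbf{Step 1: set up the limit quantities.} The sequence $\{\bar z_k\}$ is bounded, and by \eqref{eq:sumz} we have $\|\bar z_{k+1}-\bar z_k\|\to0$. Boundedness of $\hat\theta_k$ and $\|\hat\theta_k-\theta_k\|\le\|\theta_{k+1}-\theta_k\|/\lambda_{\min}$ give $\|\hat z_{k+1}-\bar z_{k+1}\|\to0$. Since $H(\bar z_k)$ is nonincreasing and bounded below, it converges to some $H^*$. Let $\Gamma$ be the set of limit points of $\{\bar z_k\}$; it is nonempty and compact, and the set of limit points of $\{\hat z_k\}$ coincides with $\Gamma$. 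For any $\bar z^*\in\Gamma$ with $\bar z_{k_j}\to\bar z^*$, the points lie in $\Omega$ and $H$ is continuous on $\R^n\times\R^n\times\Omega$, so $H(\bar z^*)=H^*$. By Lemma \ref{lem:H2}, $H(\hat z_{k+1})\to H^*$ as well. Lemma \ref{lem:H3} gives $v_{k+1}\to0$ with $v_{k+1}\in\partial H(\hat z_{k+1})$. Closedness of the limiting subdifferential, together with $H(\hat z_{k_j})\to H(\bar z^*)$, then yields $0\in\partial H(\bar z^*)$. Consequently, as remarked after \eqref{eq:merit_subdifferential}, $(x^*,\theta^*)$ is stationary for $F$. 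This settles subsequential stationarity.

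\textbf{Step 2: finite length.} First dispose of the case where $H(\hat z_{k})=H^*$ for some $k$. Sufficient decrease then forces $\|\bar z_{k+1}-\bar z_k\|=0$ eventually, which gives convergence. Otherwise $H(\hat z_k)>H^*$; we have $H(\hat z_{k+1})\ge H(\bar z_{k+1})\ge H^*$, and if equality holds on the left the sufficient decrease again stalls the iterates. KL holds at each point of the compact set $\Gamma$, and $H$ is constant there, so the uniformized KL property (\cite[Lemma 6]{Bolte-etal-2014}) gives a single $\xi$, $\upsilon$, $\epsilon$ valid near $\Gamma$. For $k$ large, $\hat z_{k+1}$ lies in that neighborhood and $H^*<H(\hat z_{k+1})<H^*+\upsilon$, hence
\[
\xi'(H(\hat z_{k+1})-H^*)\ge \frac{1}{q\|\bar z_{k+1}-\bar z_k\|}.
\]
Concavity of $\xi$ combined with Lemma \ref{lem:H1} gives
\[
\Delta_k:=\xi(H(\hat z_{k+1})-H^*)-\xi(H(\hat z_{k+2})-H^*)\ge \xi'(H(\hat z_{k+1})-H^*)\big(H(\hat z_{k+1})-H(\hat z_{k+2})\big).
\]

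\textbf{Main obstacle.} The difficulty is relating $H(\hat z_{k+1})-H(\hat z_{k+2})$ to $c\|\bar z_{k+2}-\bar z_{k+1}\|^2$. Only $H(\hat z_{k+2})\le H(\bar z_{k+1})+t_{k+1}$ is available, and $t_{k+1}$ is merely $O(\|\bar z_{k+2}-\bar z_{k+1}\|)$, not of second order. The first thing I would try is to apply KL at $\bar z_{k+1}$ instead, by strengthening Lemma \ref{lem:H2} into a genuine two-sided estimate $|H(\hat z_{k+1})-H(\bar z_{k+1})|\le C\|\bar z_{k+1}-\bar z_k\|^2$. This is plausible because $\theta_{k+1}$ and $\hat\theta_k$ differ by $\lambda_k$-scaled steps controlled by $\|\theta_{k+1}-\theta_k\|$. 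Failing that, I would use the sequence $H(\hat z_{k+1})$, which is sandwiched between the decreasing values $H(\bar z_{k+1})$ and $H(\bar z_k)+t_k$, and invoke the abstract framework of \cite{Bonettini-Prato-Rebegoldi-2018} (conditions H1--H6), for which Lemmas \ref{lem:H1}--\ref{lem:H2} are exactly tailored. From that framework, the standard inequality $2\|\bar z_{k+2}-\bar z_{k+1}\|\le\|\bar z_{k+1}-\bar z_k\|+\tfrac{q}{c}\Delta_k$ follows by AM--GM. Summing gives $\sum_k\|\bar z_{k+1}-\bar z_k\|<\infty$, so $\{\bar z_k\}$ is Cauchy and converges to some $\bar z^*\in\Gamma$. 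Therefore $(x_k,\theta_k)\to(x^*,\theta^*)$, which is stationary by Step 1.
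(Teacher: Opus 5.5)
Your fallback (b) is the paper's proof. The paper does no more than collect Lemma~\ref{lem:H1}, Lemma~\ref{lem:H3} and Lemma~\ref{lem:H2}, together with two further facts: the bound $\|\hat z_{k+1}-\bar z_k\|\le(2+1/\lambda_{\min})\|\bar z_{k+1}-\bar z_k\|$, and the continuity property $H(\hat z_{k_j+1})\to H(z_*)$ along convergent subsequences. Your Step~1 establishes both of these as well. The paper then applies \cite[Theorem 14]{Bonettini-Prato-Rebegoldi-2024}, which is the simplified form of the framework in \cite{Bonettini-Ochs-Prato-Rebegoldi-2023}. Whichever result you cite, make sure it allows the subgradient to be taken at an auxiliary point, subject to the two-sided value condition of Lemma~\ref{lem:H2}.

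Your self-contained Step~2, however, does not go through as written. You define $\Delta_k$ through $H(\hat z_{k+1})$ and $H(\hat z_{k+2})$. With that definition you cannot bound $\Delta_k$ from below, so the AM--GM inequality you state does not follow. The ``main obstacle'' is not an obstacle; it disappears with a standard observation, and no sharper version of Lemma~\ref{lem:H2} is needed.

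The fix is to telescope along $\{\bar z_k\}$ and use KL at $\hat z_{k+1}$ only to bound $\xi'$. Since $\xi$ is concave, $\xi'$ is nonincreasing. Take the non-stalling case and $k$ large, so that the left half of Lemma~\ref{lem:H2} gives
\[
0<H(\bar z_{k+1})-H^*\le H(\hat z_{k+1})-H^*<\upsilon .
\]
The right half, with $t_k\to 0$, is needed only for the upper bound $\upsilon$. Monotonicity of $\xi'$ and KL at $\hat z_{k+1}$ then give
\[
\xi'\bigl(H(\bar z_{k+1})-H^*\bigr)\ge \xi'\bigl(H(\hat z_{k+1})-H^*\bigr)\ge \frac{1}{\|v_{k+1}\|}\ge \frac{1}{q\|\bar z_{k+1}-\bar z_k\|}.
\]
Now define $\Delta_k$ along $\{\bar z_k\}$ instead. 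Concavity and Lemma~\ref{lem:H1} yield
\[
\Delta_k:=\xi\bigl(H(\bar z_{k+1})-H^*\bigr)-\xi\bigl(H(\bar z_{k+2})-H^*\bigr)\ge \frac{c\|\bar z_{k+2}-\bar z_{k+1}\|^2}{q\|\bar z_{k+1}-\bar z_k\|},
\]
and your AM--GM inequality and the finite-length conclusion follow. This is what the abstract framework does internally.

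Your route (a) would not work as stated, for two reasons.
\begin{itemize}
\item A function-value estimate between $\hat z_{k+1}$ and $\bar z_{k+1}$ does not give a small subgradient at $\bar z_{k+1}$. The normal cone to $\Omega$ at $\theta_{k+1}=\theta_k+\lambda_k d_k$ has nothing to do with $w_{k+1}$, so KL cannot be invoked at $\bar z_{k+1}$.
\item If you instead combine the estimate with telescoping along $\hat z_k$, you only get $H(\hat z_{k+1})-H(\hat z_{k+2})\ge (c-C)\|\bar z_{k+2}-\bar z_{k+1}\|^2$. That closes the argument only if $C<c$, which nothing guarantees.
\end{itemize}
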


\begin{proof}
Based on Lemmas \ref{lem:H1}-\ref{lem:H3}-\ref{lem:H2}, we have proved that there exist positive constants $c,q$ and a non-negative sequence $\{t_k\}_{k\in\N}$ converging to zero, such that for all $k\in\N$
\begin{align}
H(\bar{z}_{k+1})&\leq H(\bar{z}_{k}) - c\|\bar{z}_{k+1}-\bar{z}_k\|^2 \label{eq:fund1}\\
H(\bar{z}_{k+1})&\leq H(\hat{z}_{k+1})\leq H(\bar{z}_k)+t_k\label{eq:fund2}\\
\|v_{k+1}\|&\leq q\|\bar{z}_{k+1}-\bar{z}_k\|.\label{eq:fund3}
\end{align}
Furthermore, we have
\begin{align*}
\|\hat{z}_{k+1}-\bar{z}_k\|&\leq \|x_{k+1}-x_k\|+\|x_k-x_{k-1}\|+\|\hat{\theta}_{k}-\theta_k\|\\
&\leq \|x_{k+1}-x_k\|+\|x_k-x_{k-1}\|+\frac{1}{\lambda_k}\|\theta_{k+1}-\theta_k\|\\
&\leq \left(2+\frac{1}{\lambda_{\min}}\right)\|\bar{z}_{k+1}-\bar{z}_k\|,
\end{align*}
with the second inequality being a consequence of {\sc Step 2.3}, $\lambda_k\leq 1$ and \eqref{eq:lambdamin}, and the third inequality implied by the definition of $\{\bar{z}_k\}_{k\in\N}$ in \eqref{eq:barzk}. By property \eqref{eq:sumz}, the above inequality implies 
\begin{equation}\label{eq:fund4}
 \lim_{k\rightarrow \infty}\|\hat{z}_{k+1}-\bar{z}_k\|=0.
\end{equation}
Finally, by \eqref{eq:fund4} and the continuity of the merit function $H$ on its domain, it easily follows that
\begin{equation}\label{eq:fund5}
\forall  \ \{k_j\}_{j\in\N}\subseteq \N \ \text{such that} \ \lim_{j\rightarrow \infty}\bar{z}_{k_j}=z_* \quad \Rightarrow \quad \lim_{j\rightarrow \infty}H(\hat{z}_{k_j+1})=H(z_*).
\end{equation}
Properties \eqref{eq:fund1}-\eqref{eq:fund2}-\eqref{eq:fund3}-\eqref{eq:fund4}-\eqref{eq:fund5}, together with the boundedness of $\{(x_k,\theta_k)\}_{k\in\N}$ and the fact that $H$ satisfies the KL inequality, allow us to frame Algorithm \ref{alg:1} as a special instance of the abstract convergence framework formerly devised in \cite{Bonettini-Ochs-Prato-Rebegoldi-2023} and then simplified in \cite{Bonettini-Prato-Rebegoldi-2024}. In particular, by applying \cite[Theorem 14]{Bonettini-Prato-Rebegoldi-2024}, we obtain that the sequence $\{\bar{z}_k\}_{k\in\N}$ converges to a stationary point $(x_*,y_*,\theta_*)$ of $H$, i.e., $0\in\partial H(x_*,y_*,\theta_*)$. From the explicit expression of the subdifferential of $H$ given in \eqref{eq:merit_subdifferential}, we immediately get $x_*=y_*$, and thus
\begin{equation*}
    0\in\left(\begin{array}{c}
         \nabla_x f(x_*,\theta_*)+\nabla \phi(x_*)\\
          0\\
         \nabla_\theta f(x_*,\theta_*)+w_*
    \end{array}\right), \quad w_*\in\partial \iota_\Omega(\theta_*),
\end{equation*}
and based on the expression of the subdifferential of $F$ in \eqref{eq:subdifferential_F}, we conclude that $0 \in \partial F(x_*,\theta_*)$, so that $(x_*,\theta_*)$ is stationary for $F$.
\end{proof}

\begin{remark}
The merit function \eqref{eq:merit} satisfies the KL inequality \eqref{eq:KL} on its domain if the objective function $F$ is definable in an $o-$minimal structure \cite[Definition 6]{Bolte-etal-2007b}, as definable functions comply with \eqref{eq:KL} and finite sums of definable functions in some $o-$minimal structure remain in the same structure \cite[Remark 5]{Bolte-etal-2007b}. In the case of the blind model \eqref{eq:opt_prob_blind_super_res}, we can ensure $F$ is definable whenever the regularizer $\phi(x)=\frac{1}{2}\|x-N_\sigma(x)\|^2$ is definable in the same structure as that of the least squares term and the indicator function. This is the case when the employed neural network $N_\sigma$ is built upon definable functions such as ReLU, eLU, quadratics and SoftPlus functions, as the composition of definable functions remains definable \cite[Section 5.2]{Davis-etal-2020}.   
\end{remark}

\section{Numerical experiments}\label{sec:4}
In this section, we validate our approach on the super-resolution of real images acquired with a 1.5 T MRI scanner (Siemens) at the Neuroradiology Unit of Careggi University Hospital in Florence, Italy\footnote{The data were collected under the Italian PRIN 2022 PNRR Project “Advanced Optimization Methods for Automated Central Vein Sign Detection in Multiple Sclerosis from Magnetic Resonance Imaging (AMETISTA),” project code P2022J9SNP (CUP E53D23017980001 – B53D23027830001). The study protocol and data collection procedures were approved by the Ethics Committee “Area Vasta Centro", Tuscany Region, Italy. The data cannot be made available to individuals outside the project. }. In particular, we evaluate its performance on different slices extracted from both FLAIR and SWI sequences obtained at 1.5 T by visually comparing the super-resolved images with the corresponding slices from the 3 T acquisition. More specifically, we consider three different slices from the same patient, each exhibiting a visible lesion and distinct diagnostic features in both the FLAIR and SWI sequences. The corresponding 3 T and 1.5 T slices are shown in the first two columns of Figures \ref{fig:flair_lesion1} - \ref{fig:swi_lesion3}. Figures \ref{fig:flair_lesion1} and \ref{fig:swi_lesion1}, Figures \ref{fig:flair_lesion2} and \ref{fig:swi_lesion2}, Figures \ref{fig:flair_lesion3} and \ref{fig:swi_lesion3} report the FLAIR and the SWI sequences corresponding to the same brain slice.\\

\paragraph{\textit{Implementation details}} We first describe the implementation details of our proposal related to the objective function in \eqref{eq:opt_prob_blind_super_res}. The term $b$ corresponds to the native 1.5 T image of the considered sequence. Assuming periodic boundary conditions, the convolution $\theta * x$ is performed by means of the Fast Fourier Transform \cite{hansen2006deblurring} and the proximal operator associated to the term $f(\cdot,\theta)$ in \textsc{Step 1.2} of Algorithm \ref{alg:1} is computed using the closed form expression given in \cite{zhao2016fast}.

\noindent For the definition of $\phi$ in \eqref{eq:phi}, we select $N_{\sigma}$ as the differentiable version of the DRUNet architecture, originally proposed in \cite{zhang2021plug} and later adapted in \cite{hurault2022gradient}. More in detail, we employ the pretrained network weights released with the code of \cite{hurault2022gradient} and available through the the DeepInverse library \cite{tachella2025deepinverse}, fixing the hyper-parameter $\sigma=0.06$. 

\noindent Finally, the projection of the blurring kernel $\theta$ onto the set $\Omega$ in \textsc{Step 2.1} of Algorithm \ref{alg:1} is carried out applying the iterative algorithm proposed in \cite{Dai-etal-2006}. 

\noindent All the experiments have been performed on a workstation equipped with an Intel i9-285K processor and an NVIDIA RTX PRO 4000 GPU. The implementation is written in Python and it relies on the DeepInverse library \cite{tachella2025deepinverse}. \\

\paragraph{\textit{Model parameters}.} 
We heuristically selected the parameters $\lambda$ and $M$ in the definitions \eqref{eq:phi} and \eqref{eq:Omega}. In particular, we set $\lambda=0.15$ and $M=0.45$ for the FLAIR slices and $\lambda=0.075$ $M=0.6$ for the SWI data. The scale factor is set to $s = 2$. \\

\paragraph{\textit{Algorithm parameters}}  The initial iterates $x_{-1}$ and $x_0$ are initialized by bicubic interpolation of $b$ (with a proper shift correction as suggested in \cite{Zhang-et-al-2021}). The initial iterate $\theta_0$ is defined as a  square filter $13\times 13$ computed projecting onto the set $\Omega$ a Gaussian filter with standard deviation $1$.  Regarding the steplength parameters in \textsc{Step 1} of Algorithm \ref{alg:1}, they are fixed to $\rho=0.5$, $\alpha_x=1.34$. In \textsc{Step 2}, we set $\alpha_{\theta}=0.8$, while  the line search parameters related are chosen as $\gamma=0.5$ and $\nu=10^{-4}$. 
The maximum number of iterations is set to $100$ and it is combined with the following stopping criterion
\begin{equation}\label{eq:stop_Crit}
\frac{|f(x_{k+1},\theta_{k+1})+\phi(x_{k+1}) - f(x_{k},\theta_{k})-\phi(x_{k})|}{|f(x_{k},\theta_{k})+\phi(x_{k})|}\leq \varepsilon,
\end{equation}
with $\varepsilon = 10^{-5}$.\\ 

\paragraph{\textit{Numerical results.}} In the third column of Figures \ref{fig:flair_lesion1} - \ref{fig:swi_lesion3} we report the super resolved images obtained using our approach. Upon visual assessment, Figures \ref{fig:flair_lesion1}, \ref{fig:flair_lesion2} and \ref{fig:flair_lesion3} show that the super-resolved 1.5 T  FLAIR images exhibit a remarkable improvement in gray-white matter differentiation and sharpness of cortex, providing an easier identification and demarcation of white matter lesions. Overall, their quality closely resembles that of the 3 T FLAIR images.\\
In Figure \ref{fig:swi_lesion1}, \ref{fig:swi_lesion2} and \ref{fig:swi_lesion3}, the qualitative improvement was less pronounced on 1.5 T SWI sequences, where vascular structures are starkly less differentiated compared to 3 T SWI sequences. However, the super-resolved 1.5 T SWI showed a sharper demarcation of  small veins, allowing, in some cases, a more reliable classification of white matter lesions as CVS-positive or CVS-negative. 
Additional comments on the reconstructions obtained with our approach are provided in the figure captions to facilitate comparison between the discussion and the visual results.

\input{images/FLAIR_first}
\input{images/SWI_first}
\input{images/FLAIR_second}
\input{images/SWI_second}
\input{images/FLAIR_third}
\input{images/SWI_third}

\section{Conclusions}\label{sec:5}
In this work, we addressed the super-resolution of real 1.5 T MRI images acquired in clinical settings. The problem was formulated as a blind inverse problem involving the joint estimation of the high-resolution image and the blur kernel. The proposed approach integrates Plug-and-Play regularization for the image and suitable constraints for the kernel, and relies on a heterogeneous alternating block-coordinate scheme in which each subproblem is handled by a specifically designed algorithm. Convergence properties of the method have been rigorously established.
Experimental results on FLAIR and SWI sequences indicate improved structural sharpness and enhanced visibility of diagnostically relevant features, such as venous structures and white-matter lesions. Comparison with 3 T images from the same patient suggests that the method may partially bridge the gap between widely available low-field systems and higher-field scanners, without requiring hardware upgrades or supervised paired training data.
Future work will focus on validation on larger cohorts and extension to three-dimensional acquisitions.

\section*{Acknowledgments}
All the authors are partially supported by the  the Italian MUR through the PRIN 2022 PNRR Project “Advanced optimization METhods for automated central veIn Sign detection in multiple sclerosis from magneTic resonAnce imaging (AMETISTA)”, project code: P2022J9SNP (CUP  E53D23017980001 - B53D23027830001), under the National Recovery and Resilience Plan (PNRR), Italy, Mission 04 Component 2 Investment 1.1   funded by the European Commission - NextGeneration EU programme. \\
L.M. is supported by \#NEXTGENERATIONEU (NGEU) and funded by the Ministry
of University and Research (MUR), National Recovery and Resilience Plan (NRRP), project MNESYS (PE0000006) – A Multiscale integrated approach to the study of the nervous system in health and disease (DR. 1553 11.10.2022).\\
F.P., S. R. and A. S. are members of the Gruppo Nazionale per il Calcolo Scientifico (GNCS) of the Italian Istituto Nazionale di Alta Matematica (INdAM), which is kindly acknowledged.\\
\bibliographystyle{amsplain}
\bibliography{biblio}
\end{document}